\newcommand{\interior}{\operatorname{interior}}
\newcommand{\diam}{\operatorname{diam}}
\newcommand{\Lip}{\operatorname{Lip}}
\newcommand{\sing}{\operatorname{sing}}
\newcommand{\HH}{\mathcal H} 
\newcommand{\LL}{\mathcal L} 
\newcommand{\vv}{\bold v}
 \newcommand{\Cc}{\mathcal C}
 \newcommand{\FF}{\mathcal{F}}
 \newcommand{\RR}{\mathbf{R}}  
 \newcommand{\BB}{\mathbf{B}}  
 \newcommand{\Ss}{\mathbf{S}}  
 \newcommand{\dist}{\operatorname{dist}}
 \newcommand{\eps}{\epsilon}
 \newcommand{\vol}{\operatorname{vol}}
    \newtheorem{theorem}    {Theorem}       [section]
    \newtheorem{lemma}      [theorem]       {Lemma}
    \newtheorem{corollary}  [theorem]     {Corollary}
    \newtheorem{proposition}       [theorem]       {Proposition}
    \theoremstyle{definition}
    \newtheorem{definition}  [theorem] {Definition}
    \theoremstyle{definition}
    \newtheorem{remark}   [theorem]       {Remark}
    \newtheorem*{remark*} {Remark}
\begin{document}

  \newcommand{\marginpor}[1]{}

\renewcommand{\thesubsection}{\thetheorem}

\title[Topological Change in Mean Convex Mean Curvature Flow]{Topological Change in Mean Convex \\ Mean Curvature Flow}
\author{Brian White}
\thanks{This research was supported by the National Science Foundation 
  under grants~DMS-0406209 and~DMS-1105330.}
\subjclass[2010]{53C44.}
\email{white@math.stanford.edu}
\date{July 30, 2011.  Revised March 26, 2012}
\begin{abstract} Consider the mean curvature flow of an $(n+1)$-dimensional compact, mean convex region
in Euclidean space (or, if $n<7$, in a Riemannian manifold).   We prove that elements of
the $m^{\rm th}$ 
homotopy group of the complementary region 
can die only if there is a shrinking $\Ss^k\times \RR^{n-k}$
singularity for some $k\le m$.  We also prove that for each $m$ with $1\le m\le n$,
there is a nonempty open set of compact, mean convex regions $K$ in $\RR^{n+1}$ with smooth boundary 
$\partial K$ for which the resulting mean curvature flow has a shrinking $\Ss^m\times\RR^{n-m}$ singularity.
\end{abstract}

\maketitle

\section{Introduction}

Let $K(t)$ be a compact, time-dependent  
region in a Riemannian manifold such that the boundary $\partial K(t)$ moves by mean curvature flow.
Clearly the topology of the complement $K(t)^c$ can change only if there is a singularity of the flow.   It is natural to ask
if we can deduce properties of the singularities from the way in which the topology changes.   In this paper,
we give a rather precise answer if the regions are mean convex.
In particular, 
consider a mean curvature flow $t\in [0,\infty)\mapsto K(t)$ of compact
regions in an $(n+1)$-dimensional Riemannian manifold $N$ such that $K(0)$ is mean
convex and has smooth boundary.  If $n\ge 7$, we require that 
 $N$ be $\RR^{n+1}$ with the 
 Euclidean metric.\footnote{None of the arguments in this paper depend on dimension.
However, they do require that the singularities of the flow 
have convex type (as defined in \S\ref{prelim}), 
and in high dimensions it has not been proved that all singularities have
convex type
except when the ambient space is Euclidean.}.
We prove a theorem that implies the following:

\begin{theorem}\label{IntroTheorem}
Suppose that $0\le a < b$ and that
 there is a map of the $m$-sphere into $K(a)^c$ that is homotopically trivial in $K(b)^c$
but not in $K(a)^c$. 

Then at some time $t$ with $a\le t < b$, there is a singularity of the flow
at which the Gaussian density is $\ge d_m$, the Gaussian density of a shrinking $m$-sphere
in $\RR^{m+1}$, and at which the tangent flow is a shrinking 
      $\Ss^k\times\RR^{n-k}$ for some $k$ with $1\le k\le m$.
\end{theorem}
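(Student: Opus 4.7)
The plan is first to locate a critical singular time $t_0 \in [a,b)$ at which the class $[\sigma]$ dies, and then to use the cylindrical structure of tangent flows in the mean convex setting, together with a general-position argument, to force one such cylinder to have sphere factor of dimension at most $m$.

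\textbf{Locating the critical time.} Set $t_0 = \inf\{t \in [a, b] : \sigma \text{ is null-homotopic in } K(t)^c\}$. By hypothesis $t_0 > a$. If $t_0 = b$, then $\sigma$ is nontrivial in $K(t)^c$ for every $t<b$; but any null-homotopy in $K(b)^c$ has compact image and hence lies in $K(t)^c$ for some $t<b$ (using the left-continuity $K(b)^c = \bigcup_{t<b} K(t)^c$), a contradiction. So $t_0 \in (a,b)$. If $t_0$ were a regular time, the flow would be smooth near $t_0$ and the homotopy type of $K(\cdot)^c$ would be locally constant at $t_0$, again contradicting the infimum. Hence $t_0$ is a singular time of the flow.

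\textbf{Cylindrical structure and the dimension bound.} By the classification of singularities in mean convex mean curvature flow, every point of the singular set $\Sigma_{t_0}\subset\partial K(t_0)$ has a shrinking $\Ss^k\times\RR^{n-k}$ tangent flow for some $k\ge 1$, and the stratum of points with $k$-cylinder tangent flow has Hausdorff dimension at most $n-k$. I now argue by contradiction: suppose that at every point of $\Sigma_{t_0}$ the tangent cylinder has $k\ge m+1$. Then $\dim\Sigma_{t_0}\le n-m-1$, so that $(m+1)+\dim\Sigma_{t_0}\le n<n+1$.

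\textbf{General-position contradiction.} Fix a small $\epsilon>0$ and choose a null-homotopy $H\colon\BB^{m+1}\to K(t_0+\epsilon)^c$ of $\sigma$. The newly opened region $K(t_0+\epsilon)^c\setminus K(t_0-\epsilon)^c$ lies in a thin neighborhood of $\partial K(t_0)$, and away from $\Sigma_{t_0}$ the flow is smooth there, producing a collar. By transversality, $H$ can be perturbed rel boundary to avoid a fixed neighborhood of $\Sigma_{t_0}$; then pushing its image slightly outward along the unit normal of the smooth part of $\partial K(t_0)$ places it inside $K(t_0-\epsilon)^c$. This yields a null-homotopy of $\sigma$ at a time strictly less than $t_0$, contradicting the definition of $t_0$. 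Hence some tangent cylinder at $\Sigma_{t_0}$ has $k\le m$, and since $d_k$ is strictly decreasing in $k$, the Gaussian density there is $\ge d_m$. The principal technical obstacle is precisely this last step: coupling the topological perturbation (dodging a set of small Hausdorff dimension) with the analytic push-outward requires a careful description of how $K(\cdot)^c$ evolves across the singular time, using both the fine stratification of $\Sigma_{t_0}$ from the mean-convex regularity theory and the smoothness of the flow off its singular set.
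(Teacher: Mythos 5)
Your approach is genuinely different from the paper's. The paper proves the more general Theorem~\ref{main} by introducing the quantity $Q(K)$ and running an open--closed argument on the set $J$ of times $t$ for which some map in the homotopy class has image disjoint from $K(t)$; the crucial technical ingredient is Lemma~\ref{lemma}, which constructs maps into the regular part of $\partial K(T)$ by triangulating and taking $\Lip_s$-minimizing extensions over simplices, passing to a blow-up limit, and using the homotopy triviality of the limit convex boundary. You instead locate a critical time $t_0$ and argue by contradiction via transversality and a ``push outward.''

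The general-position part of your argument is fine and in fact mirrors a step in the paper's proof (where $F$ is perturbed to avoid $\sing\partial K(T)$; see the Appendix). The critical time argument is also fine: the nestedness of the $K(t)$'s makes $\{t : \sigma \text{ null-homotopic in } K(t)^c\}$ an interval, and Brakke regularity rules out $t_0$ being a regular time. The gap is the ``push outward'' step, and you are right to flag it as the principal obstacle, but it is not merely a technical detail: it is where the paper's entire Lemma~\ref{lemma} lives. After perturbing the null-homotopy $H$ to avoid a compact neighborhood $\overline U$ of $\Sigma_{t_0}$, you must deform $H$ through $K(b)^c$ into $K(s)^c$ for some $s<t_0$, using a vector field transverse to the moving surfaces. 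This requires (a) that for $\delta,\epsilon$ small the singular set at all times in $[t_0-\epsilon, t_0+\delta]$ is contained in $\overline U$ (plausible by upper semicontinuity, but needs an argument, and interacts delicately with the order in which $\delta$, $\overline U$, and $\epsilon$ are chosen), and (b) that the integral curves of the outward flow starting in $H(\BB^{m+1})\setminus \overline U$ never re-enter $\overline U$ before reaching $\partial K(s)$. Neither is established. Point (b) in particular is not a soft fact: it depends on the structure of blow-up limits near convex-type singularities, which is exactly the information the paper encodes in $Q(K)$ and exploits in the compactness argument of Lemma~\ref{lemma-restated}. Note also that the paper's lemma uses not only the Hausdorff-dimension bound on $\sing(K)$ (condition~(a) of Definition~\ref{Q-definition}) but also the homotopy triviality of the blow-up limit (condition~(c)); your argument uses only the dimension bound, and it is not obvious that the dimension bound alone yields the required collar/retraction.

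A secondary structural point: your argument assumes a single clean critical time $t_0$, whereas the singular times of a mean convex flow need not be isolated. The paper's formulation via the open--closed set $J$ and the abstract Theorem~\ref{main-abstract} avoids this issue entirely, and also yields the full $m$-connectivity of the pair $(K(b)^c, K(a)^c)$ rather than the vanishing of a single homotopy class, which is what makes the duality results of Section~\ref{duality-section} possible.
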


The following is an interesting special case:

\begin{corollary}\label{corollary}
Suppose that $K$ is a compact, mean convex subset of $\RR^{n+1}$ with smooth boundary,
and suppose that there is a map of the $m$-sphere into $K^c$ that is homotopically nontrivial.
Then the resulting mean curvature
flow has a singularity at which the Gaussian density $\ge d_m$, and at which the tangent flow
is a shrinking $\Ss^k\times \RR^{n-k}$ for some $k$ with $1\le k\le m$.
\end{corollary}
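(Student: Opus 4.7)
The plan is to derive Corollary \ref{corollary} as an essentially immediate consequence of Theorem \ref{IntroTheorem}. Setting $a = 0$, what I need to exhibit is some time $b > 0$ at which the given homotopically nontrivial map $f \colon \Ss^m \to K^c$ becomes nullhomotopic in $K(b)^c$; Theorem \ref{IntroTheorem} will then supply a singularity of the stated type at some intermediate time $t \in [0, b)$.

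To produce such a $b$, I would invoke finite extinction of compact mean convex mean curvature flow in $\RR^{n+1}$. Since $K = K(0)$ is compact, it is contained in some large ball $B_R$ centered at the origin. The avoidance principle forces $K(t) \subset B_{r(t)}$, where $r(t) = \sqrt{R^2 - 2nt}$ is the radius of the round sphere with initial radius $R$ under mean curvature flow. In particular $K(t) = \emptyset$ for every $t \ge T := R^2/(2n)$, so $K(t)^c = \RR^{n+1}$ is contractible for such $t$. Taking any $b > T$, the map $f$ is automatically nullhomotopic in $K(b)^c$, while by hypothesis it is not nullhomotopic in $K(a)^c = K^c$, so the hypotheses of Theorem \ref{IntroTheorem} with this choice of $a$ and $b$ are satisfied.

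Applying Theorem \ref{IntroTheorem} then yields a singularity at some $t \in [0, b)$ at which the Gaussian density is at least $d_m$ and the tangent flow is a shrinking $\Ss^k \times \RR^{n-k}$ for some $k$ with $1 \le k \le m$, which is precisely the conclusion of the corollary. There is no substantive obstacle beyond the avoidance-principle argument for extinction, so this really is a one-line deduction once Theorem \ref{IntroTheorem} is in hand; the work lies entirely in proving the theorem itself.
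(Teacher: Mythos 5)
Your proposal is correct and is exactly the paper's own argument: the paper deduces the corollary from Theorem~\ref{IntroTheorem} by noting that compact subsets of Euclidean space vanish in finite time under mean curvature flow, so one may take $b$ large enough that $K(b)=\emptyset$ and hence $K(b)^c$ is contractible. Your explicit appeal to the avoidance principle against a shrinking enclosing sphere just fills in the standard justification for that finite extinction.
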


The corollary follows from the theorem because compact subsets of Euclidean space
disappear in finite time under mean curvature flow. (Thus we can choose $b$ large enough that $K(b)$ is empty.)

More generally, the topological assumption in Theorem~\ref{IntroTheorem} can
be replaced by the weaker assumption that
there is a continuous map
\[
   \text{ $F: \BB^{m+1} \to K(b)^c$ with $F(\partial \BB^{m+1})\subset K(a)^c$}
\]
that cannot be homotoped by a $1$-parameter family of such maps to a map
$F'$ whose image $F'(\BB^{m+1})$ lies in $K(a)^c$.  See Theorem~\ref{main}.  
(The $m$ in Theorem~\ref{main} corresponds to $(m+1)$ here.)

In Theorems~\ref{IntroTheorem} and~\ref{main}, the moving hypersurfaces $\partial K(t)$ have
no boundary.  Those theorems generalize to hypersurfaces with boundary, where
the motion of the boundary is prescribed:  see Theorem~\ref{main-boundary}.  The theorems
also generalize to flows in which the surfaces move with normal velocity equal to 
the mean curvature plus the normal component
of a smooth vectorfield on the ambient space. See the discussion in \S\ref{abstract-form-section}.

The Gaussian density inequalities in the various theorems are sharp.
For example, to see that the inequality in Corollary~\ref{corollary} is sharp  
in the case $n=3$ and $m=1$,  
let $K$ be a thin, rotationally symmetric, solid torus in $\RR^3$,
i.e., the set of points at distance $\le \eps$ from a round circle in $\RR^3$.  If $\eps$ is sufficiently small,
$K$ will be mean convex and its complement will have nontrivial fundamental group. 
Furthermore, using the rotational symmetry, it is not hard to show that (under the flow) the surface
collapses to a round circle, that each singularity is a shrinking $\Ss^1\times\RR^1$, and therefore that
the Gaussian density is $d_1$.
To see the sharpness for arbitrary dimensions $n$ and $m$, 
 let $K$ be the set of
points in $\RR^{n+1}$ at distance $\le\eps$ from a round $(n-m)$-sphere in $\RR^{n+1}$.
Then $K^c$ has nontrivial $m^{\rm th}$ homotopy and the singularities all have Gaussian
density $d_m$.

The reader may wonder whether the density inequalities in the various theorems could be replaced
by suitable equalities.   For example, could Theorem~\ref{IntroTheorem} be strengthened
to say that there is a singularity of Gaussian density $d_k$ with $k$ equal to the smallest
$m$ for which the theorem's hypotheses hold?
The answer is no: even in the case of a single spacetime singularity, the change in topology does
not determine the singularity type.   For example,
Altschuler, Angenent, and Giga~\cite{AAG} proved that that there is a ``doubly-degenerate neckpinch'' 
mean curvature flow in $\RR^3$:
the moving surface is a smooth, axially symmetric, mean convex topological sphere until
it collapses to a point, at which point the tangent flow is not a shrinking $\Ss^2$ but rather
a shrinking $\Ss^1\times \RR$.   Compare this flow to a flow consisting of shrinking round
sphere in $\RR^3$.
The two flows are  completely equivalent topologically, yet the singularities are different.
In general, for every $k$ and $n$ with $1\le k <$n, one can construct 
a mean convex $n$-sphere in $\RR^{n+1}$ that shrinks to a point but whose tangent flow
at that point is a shrinking $\Ss^k\times\RR^{n-k}$.

The theorems mentioned so far are about changes in topology of the complements $K(t)^c$
of the moving regions $K(t)$.  One can also ask about changes in topology of the
regions themselves.   By standard topological duality theorems, 
the results described above imply results about the regions $K(t)$.
See Section~\ref{duality-section}.   

In Section~\ref{persist-section}, the results described above, in particular Theorem~\ref{IntroTheorem} 
and Corollary~\ref{corollary}, are used to prove a result about persistence of singularities
under perturbations of the initial surface: we prove that for $1\le m\le n$, there is a nonempty
open set of compact, mean convex regions in $\RR^{n+1}$ with smooth boundary such
that the resulting mean curvature flow has a shrinking $\Ss^m\times \RR^{n-m}$ singularity.
To put this in context, let $\mathcal{C}_n$ be the collection of tangent flows
at singularities of mean curvature flows of hypersurfaces in $\RR^{n+1}$, where two tangent flows are identified
if they are related by an ambient isometry.
We can think of elements of $\mathcal{C}_n$ as singularity types.   Let us call a singularity type $T$
in $\mathcal{C}_n$ {\em avoidable} if the following holds: for a 
  generic smooth, compact, emdedded hypersurface in $\RR^{n+1}$, $T$ does not occur
  as a tangent flow in 
  the  resulting mean curvature 
  flow\footnote{\,In general, one does not have uniqueness (after the first singularity)
  for mean curvature flow: there may be more than one mean curvature flow with a given 
  initial surface.  
  However, in the definition of avoidable singularity types, only generic initial surfaces matter.
  For generic initial surfaces, the flow is unique and so one may speak of {\em the} resulting
  mean curvature flow.}.
  Otherwise, we call
  $T$ {\em non-avoidable}.
Thought experimentation suggests that the non-avoidable singularity types
are precisely the shrinking spheres and cylinders, i.e., the $\Ss^m\times \RR^{n-m}$.
   Theorem~\ref{persist-theorem}
shows that each $\Ss^m\times \RR^{n-m}$ is indeed non-avoidable.
In the other direction, Colding and Minicozzi~\cite{CM-Generic-I} have made
important progress
toward proving that all other singularity types are avoidable. 
The most basic open question is whether $n$-planes of multiplicity $>1$ are avoidable.
(It seems likely that an $n$-plane of multiplicity $>1$ is not merely avoidable, but that in fact
it can never occur as a tangent flow if the initial
surface is compact and smoothly embedded.)

The theorems in this paper all assume mean convexity.  The reader may wonder
what happens if that assumption is dropped.
Certain analogs of the theorems hold for arbitrary (i.e., not  necessarily mean convex)
hypersurfaces in $\RR^3$ and $\RR^4$; see~\cite{Ilmanen-White-GaussDensity}.
For hypersurfaces (of any dimension) that do not fatten under level set flow, there are some restrictions on
the way that the topology of the complement can change, no matter what kinds of singularities 
occur.  See~\cite{White-Topology}.

In~\cite{Ilmanen-White-GaussDensity},  the results of this paper are used to get lower Gaussian 
density bounds
on self-similar shrinkers for mean curvature flow, and
in~\cite{Ilmanen-White-MinimizingCone} and \cite{Ilmanen-White-MinimalCone} they are used to get lower bounds for
densities of minimal cones. 

The results of this paper rely strongly on properties of singularities of mean convex mean curvature
flow that were proved in~\cite{White-Size} and~\cite{White-Nature}.   
However, the density bounds in this paper are vacuously true for flows that have any singularities
with Gaussian density $\ge 2$.  Thus for the density bounds in the paper, one only needs the results
of~\cite{White-Size} and~\cite{White-Nature} under the assumption that the singularities have Gaussian density $<2$, and  
the most complicated parts of those papers are trivially true under that assumption.

\section{Preliminaries}\label{prelim}

In this section, we state the facts about mean curvature flow of mean convex sets that
are important for this paper.  Let $t\in [0,\infty)\mapsto K(t)$ be a mean curvature flow of mean convex subsets
of a smooth,  $(n+1)$-dimensional Riemannian manifold.  If $x\in\partial K(t)$ is a regular
point, we let $\kappa_1(x)\le \kappa_2(x)\le \dots \le \kappa_n(x)$ be the principal curvatures
of $\partial K(t)$ with respect to the inward unit normal normal.   (We could also write $\kappa_i(x,t)$, but
since the surfaces $\partial K(t)$ for distinct values of $t$ are disjoint, $t$ is determined by $x$.)
We let $h(x)=\kappa_1(x) +\dots +\kappa_n(x)>0$ be the scalar mean curvature.
We say that a singular point $x\in \partial K(t)$ (where $t>0$) has {\em convex type} provided
\begin{enumerate}
\item\label{cylinder} Each tangent flow at $x$ is a self-similarly shrinking $\Ss^k\times \RR^{n-k}$
  for some $k\ge 1$.
\item\label{umbilicity} If $x_i\in \partial K(t_i)$ is a sequence of regular points converging to $x$, then
\[
  \liminf\frac{\kappa_1(x_i)}{h(x_i)} \ge 0.
\]
\end{enumerate}
(Actually \eqref{cylinder} follows from~\eqref{umbilicity}, which in turn follows from the seemingly weaker
assumption that the $\liminf$ in~\eqref{umbilicity} is $>-\infty$ for every such sequence $x_i$.
See~\cite{White-Nature}.)

In many situations, singularities are known to have convex type:

\begin{proposition}\label{ConvexType}
Suppose that $K$ is a compact, mean convex region in an $(n+1)$-dimensional Riemannian
manifold. Let $t\in [0,\infty) \mapsto K(t)$ be the mean curvature flow with $K(0)=K$.  Suppose that
\begin{enumerate}[\upshape (1)]
\item\label{n<7} $n<7$, or
\item\label{flat} $\partial K$ is smooth and $N=\RR^{n+1}$.
\end{enumerate}
Then for $t>0$, the singularities of the flow all have convex type.
\end{proposition}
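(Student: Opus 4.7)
The plan is to extract the conclusion from the main technical results of \cite{White-Size} and \cite{White-Nature}. As the parenthetical remark after the definition of convex type indicates, \eqref{cylinder} follows from \eqref{umbilicity}, and \eqref{umbilicity} follows from the apparently weaker statement that $\liminf \kappa_1(x_i)/h(x_i) > -\infty$ for every sequence of regular points $x_i$ converging to the singular point $x$. So the entire task reduces to establishing this one-sided bound at each singular point, after which \cite{White-Nature} delivers both conditions for free.

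To establish the lower bound I would run the blow-up / maximum principle scheme of \cite{White-Size}. Suppose for contradiction that at some singular $x\in\partial K(t)$ there is a sequence $x_i\to x$ of regular points with $\kappa_1(x_i)/h(x_i)\to -\infty$. Rescale the flow parabolically about $(x_i,t_i)$ so that $h(x_i)=1$ after rescaling and, using Huisken's monotonicity formula together with Brakke compactness, pass to a subsequential limit. The limit is a nontrivial ancient mean-convex weak flow along which $\kappa_1/h$ attains arbitrarily negative values somewhere; applying the strong maximum principle to the evolution equation for $\kappa_1/h$ (or a suitable scale-invariant curvature-pinching quantity) would force the limit flow to split off a line in a direction incompatible with mean convexity, giving the contradiction.

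The difference between cases \eqref{n<7} and \eqref{flat} lies entirely in the regularity of the blow-up limit. In \eqref{flat} the smoothness of $\partial K$ and the Euclidean ambient geometry let one apply the Brakke-flow machinery of \cite{White-Size} directly. In \eqref{n<7} the dimension restriction is what guarantees that further tangent flows of the blow-up are smooth enough for the maximum principle to bite: Simons' theorem implies every stable minimal hypercone in $\RR^{n+1}$ with $n<7$ is a hyperplane, so the blow-up limits carry classical smooth structure away from a small singular set. The main obstacle, already resolved in \cite{White-Nature}, is propagating this regularity through the weak flow in the Riemannian setting so that $\kappa_1/h$ passes cleanly to the limit along a diagonal sequence of rescalings; absent either hypothesis \eqref{n<7} or \eqref{flat}, one does not presently know how to rule out exotic singular blow-ups, which is precisely why the author restricts to these two cases.
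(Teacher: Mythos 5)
The paper's own ``proof'' of this proposition consists entirely of pointing to the literature: case~\eqref{n<7} is attributed to \cite{White-Nature}, and case~\eqref{flat} to the separate, later paper \cite{White-Subsequent}. Your sketch of the blow-up/one-sided-pinching mechanism underlying \cite{White-Nature} is a reasonable caricature of how that argument goes, and you are right that the whole issue reduces (by the parenthetical remark in the definition) to establishing $\liminf \kappa_1/h>-\infty$ at the singular point.

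Where your proposal goes wrong is in how it handles case~\eqref{flat} and, relatedly, in which of the two cases is the hard one. You assert that in case~\eqref{flat} ``the smoothness of $\partial K$ and the Euclidean ambient geometry let one apply the Brakke-flow machinery of \cite{White-Size} directly,'' implying that the smooth Euclidean case is the straightforward one and the dimension restriction $n<7$ is what saves the Riemannian case. That is backwards, and in fact the paper says so explicitly in the discussion at the end of \S\ref{abstract-form-section}: ``The proof in \cite{White-Subsequent} of convex-type in $\RR^{n+1}$ for $n\ge 7$ is more delicate and does not seem to generalize to flows \eqref{forcing-term} with a drift term.'' The dimension restriction $n<7$ is exactly the range in which the partial-regularity input (via elliptic regularization and Simons-type rigidity for minimal hypersurfaces) makes the argument of \cite{White-Nature} go through for \emph{any} Riemannian ambient manifold; case~\eqref{flat} extends the conclusion to $n\ge 7$ only in the Euclidean setting and only by the substantially more involved argument of \cite{White-Subsequent}, which your proposal never cites. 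So the proposal misses both the correct citation and the actual logical structure separating the two hypotheses.
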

See \cite{White-Nature} for the proof in the case~\eqref{n<7} and~\cite{White-Subsequent}
for the proof in case~\eqref{flat}.

\begin{proposition}\label{NearestPoint}
Let $t\in [a,b]\mapsto K(t)$ be a mean curvature flow of mean convex sets, and suppose that
the singularities of the flow have convex type.
Let $t\in (a,b]$ and let $x$ be a point in the interior of $K(t)$.  Let $y$ be a point in $\partial K(t)$
that minimizes distance to $x$.  Then $y$ is a regular point of the flow.
\end{proposition}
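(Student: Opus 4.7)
The plan is to suppose $y$ is a singular point of the flow and derive a contradiction from the fact that the open ball $B(x,r)$ of radius $r := |x-y|$ around $x$ sits inside $K(t)$ tangent to $\partial K(t)$ at $y$. Set $v := (x-y)/r$. Since $y$ minimizes distance from $x$ to $\partial K(t)$, the ball $B(x,r)$ is disjoint from $\partial K(t)$, and by connectedness together with $x$ being in the interior of $K(t)$, we get $B(x,r) \subset K(t)$. Mean convexity implies that the regions $K(s)$ grow as $s$ decreases, so $B(x,r) \subset K(s)$ for every $s \in [a,t]$.

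Now assume for contradiction that $y$ is singular. By the convex-type hypothesis, some tangent flow at $(y,t)$ is a self-similarly shrinking $\Ss^k \times \RR^{n-k}$ with $1 \le k \le n$, whose enclosed region at rescaled time $\tau < 0$ is (up to an ambient rotation) the solid cylinder
\[
C_\tau := \overline{B^{k+1}(\sqrt{-2k\tau})} \times \RR^{n-k},
\]
which is bounded in the first $k+1$ coordinates. Fix rescalings $\lambda_i \to \infty$ centered at $(y,t)$ realizing this tangent flow. The key input from the mean convex framework of \cite{White-Size} and \cite{White-Nature} is that $\partial K$ is approached from one side only, so the rescaled regions $\lambda_i(K(t + \tau/\lambda_i^2) - y)$ converge to $C_\tau$ locally in Hausdorff distance, not merely the surfaces as varifolds. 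Meanwhile, the rescaled ball $\lambda_i(B(x,r) - y) = B(\lambda_i r v, \lambda_i r)$ converges to the open half-space $H := \{z \in \RR^{n+1} : z \cdot v > 0\}$, since $z \in B(\lambda_i r v, \lambda_i r)$ iff $|z|^2 < 2 \lambda_i r (z \cdot v)$, which for large $\lambda_i$ is equivalent to $z \cdot v > 0$.

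Passing the inclusion $B(x,r) \subset K(t + \tau/\lambda_i^2)$ to the limit yields $H \subset C_\tau$ for every $\tau < 0$. This is absurd: $C_\tau$ has finite extent $\sqrt{-2k\tau}$ in the $k+1$ directions perpendicular to the axis, whereas $H$ is unbounded in at least one such direction (if $v$ is not along the axis, the points $Mv \in H$ have disk-component of magnitude $M\,|v_{\mathrm{disk}}|$; if $v$ lies in the axis, then $H$ still contains points with arbitrarily large disk-component and positive $v$-component). The contradiction forces $y$ to be a regular point. I expect the only real obstacle to be justifying the Hausdorff convergence of the rescaled \emph{regions} to $C_\tau$ rather than just varifold convergence of the surfaces; however, this one-sided structure is precisely what the mean convex analysis of \cite{White-Size} and \cite{White-Nature} provides, so it should be invoked rather than re-derived.
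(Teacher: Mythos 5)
Your proof is correct, but it takes a genuinely different route from the paper's one-paragraph argument. The paper computes the volume density of $K(t)$ at $y$: the tangent ball $\BB(x,\dist(x,y))\subset K(t)$ forces a lower density bound of $1/2$, while a convex-type singular point of a mean-convex flow has volume density $0$ (near such a point the spatial slice $K(t)$ collapses onto a lower-dimensional set). Your argument instead performs a full spacetime blow-up at $(y,t)$, examines the rescaled regions at a fixed negative rescaled time $\tau$, and derives a contradiction from the geometric impossibility of nesting a half-space inside a solid cylinder of finite transverse radius. The two proofs rest on the same structural input (mean convexity yields one-sided, locally Hausdorff convergence of the rescaled regions, not merely varifold convergence of the hypersurfaces), but the paper's density formulation is shorter: it stays entirely at the single time $t$ and avoids passing a set inclusion through a spacetime limit, or needing the nestedness $K(s)\supset K(t)$ for $s\le t$. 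Your version has the virtue of making the geometric incompatibility quite concrete where the paper simply asserts the density-zero claim is ``straightforward,'' and your case analysis on the direction $v$ (axial versus non-axial) is handled correctly.
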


\begin{proof}
Note that $K(t)$ contains the ball with center $x$ and radius $\dist(x,y)$, from which it follows
\[
  \liminf_{r\to 0} \frac{\vol(K(t)\cap \BB(y,r))}{\vol(\BB(y,r))} \ge \frac12.
\]
On the other hand, if $z\in \partial K(t)$ is a singular point of convex type, then
it is straightforward to show that
\[
    \lim_{r\to 0} \frac{\vol(K(t)\cap \BB(z,r))}{\vol(\BB(z,r))} = 0.
\]
\end{proof}

\begin{proposition}[Stone]\label{Stone}
Let $x$ be a convex-type singularity of a mean convex mean curvature flow.
Then there is a $k=k(x)\ge 1$ such that
every tangent flow at $x$ is a shrinking $\Ss^k\times \RR^{n-k}$, where $k$ depends only 
on the Gaussian density $\Theta$ at the point $x$. (It does not depend on the sequence of 
spacetime dilations
used to obtain the tangent flow.)
\end{proposition}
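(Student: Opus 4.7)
\emph{Proof proposal.} The plan is to show that the integer $k$ appearing in a tangent flow $\Ss^k\times\RR^{n-k}$ at $x$ is determined by the Gaussian density $\Theta$ of the original flow at $x$, and hence is the same for every tangent flow at $x$.

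First I would invoke Huisken's monotonicity formula (with the appropriate error term when the ambient manifold is curved).  The standard consequence is that at every spacetime point the Gaussian density ratios are monotone and converge to a well-defined density $\Theta$; moreover, for any sequence $\lambda_i\to\infty$ of parabolic dilations about that spacetime point for which the dilated flows converge, the limit flow is backwardly self-similar about the origin of spacetime and has Gaussian density equal to $\Theta$ at that origin.  In particular, every tangent flow at the point $x$ of the proposition has density $\Theta$ at its own singular point.

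Next, the convex-type hypothesis \eqref{cylinder} guarantees that every such tangent flow is a self-similarly shrinking $\Ss^k\times\RR^{n-k}$ for some $k\ge 1$.  A direct Gaussian computation---splitting off the $\RR^{n-k}$ factor by Fubini so that the Euclidean factor contributes~$1$---shows that the density at the singular point of $\Ss^k\times\RR^{n-k}$ equals
\[
   d_k \;=\; (4\pi)^{-k/2}\int_{\Ss^k(\sqrt{2k})} e^{-|y|^2/4}\,d\HH^k(y)
       \;=\; \frac{2\sqrt{\pi}}{\Gamma\!\left(\tfrac{k+1}{2}\right)}\!\left(\tfrac{k}{2e}\right)^{\!k/2},
\]
independent of the ambient dimension $n$.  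Combining with the previous step, any tangent flow at $x$ is an $\Ss^k\times\RR^{n-k}$ with $d_k=\Theta$.

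The remaining ingredient---and in fact the only genuinely quantitative one---is to verify that the sequence $k\mapsto d_k$ is strictly monotone on the positive integers, so that the equation $d_k=\Theta$ pins down $k$ uniquely.  I expect this short Gamma-function computation (direct evaluation for small $k$ together with a Stirling-type estimate on the ratio $d_{k+1}/d_k$) to be the main, if modest, obstacle.  Once strict monotonicity is in hand, the $k$ appearing in every tangent flow at $x$ equals the unique preimage of $\Theta$ under $k\mapsto d_k$, which is simultaneously the well-definedness of $k(x)$ and its dependence on $\Theta$ alone.
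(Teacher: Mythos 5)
Your proposal is correct and follows essentially the same route as the paper: compute $d_k$ explicitly, observe that (by Huisken's monotonicity) every tangent flow at $x$ has Gaussian density $\Theta$, and conclude from the strict decrease $d_1>d_2>\dotsb$ that $k$ is uniquely determined by $\Theta$. The paper likewise only sketches this argument (attributing the details, including the monotonicity of $k\mapsto d_k$, to Stone's appendix), so the fact that you defer the Gamma-function computation rather than carrying it out matches the level of detail in the source.
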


Thus the tangent flow is unique up to rotations. 
For the reader's convenience, we give the idea of Stone's proof.  See \cite{Stone-Density}*{Appendix A}
for details.

\begin{proof}
The Gaussian density $d_k$ of a shrinking $\Ss^k\times \RR^{n-k}$
(where $\Ss^k$ is the unit $k$-sphere in $\RR^{k+1}$)
may be calculated explicitly:
\begin{align*}
    d_k 
    &= \left(\frac{k}{2\pi e}\right)^{k/2}\sigma_k  \\
    &=  \left( \frac{k}{2e}\right)^{k/2} \left( \frac{2\sqrt{\pi}}{\Gamma(\frac{k+1}2)} \right),
\end{align*}
where $\sigma_k$ is the area of a $k$-dimensional sphere of radius $1$.
(As the notation indicates, the value of $d_k$ turns out not to depend on $n$.)
Using this formula, one can show that
\begin{equation}\label{DensitiesDecrease}
   d_1>d_2> \dots.
\end{equation}
Now if a shrinking $\Ss^k\times \RR^{n-k}$ is a tangent flow to $t\mapsto K(t)$
at the point $(x,t)$, then $d_k=\Theta$.  Thus by~\eqref{DensitiesDecrease},
 $k$ is determined by $\Theta$.
 \end{proof}
 
\begin{proposition}\label{SingularSetSize}
Let $\Sigma_k$ be the set of spacetime points $(x,t)$ such that
\begin{enumerate}
\item $t>0$,
\item  $x\in \partial K(t)$,
\item $x$ is a singular point of convex type, and 
\item the Gaussian density at $(x,t)$ is $d_k$ (or, equivalently, the tangent flows at
$x$ are shrinking $\Ss^k\times \RR^{n-k}$s.)
\end{enumerate}
Then $\Sigma_k$ has parabolic Hausdorff dimension at most $(n-k)$.
\end{proposition}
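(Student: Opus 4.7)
The plan is to apply a parabolic version of Almgren's stratification theorem (equivalently, Federer's dimension reduction). The key structural input is Proposition~\ref{Stone}: every tangent flow at a point of $\Sigma_k$ is a shrinking cylinder $\Ss^k\times\RR^{n-k}$. Such a cylinder is invariant under the full $(n-k)$-dimensional group of spatial translations parallel to the $\RR^{n-k}$-factor and, because $\Ss^k$ contains no affine line, admits no strictly larger translational symmetry.

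First, I would set up the stratification quantitatively. For a spacetime point $X=(x,t)$, a scale $r>0$, a tolerance $\eps>0$, and an integer $j\ge 0$, call $X$ \emph{$(j,\eps,r)$-symmetric} if the parabolic rescaling of the flow by the factor $1/r$ centered at $X$ is, on the unit parabolic ball, within Hausdorff/Brakke distance $\eps$ of a flow of the form $\mathcal{F}\times\RR^{j}$ for some lower-dimensional flow $\mathcal{F}$. Define $\mathcal{S}^j_\eps$ to be the set of spacetime points that are \emph{not} $(j+1,\eps,r)$-symmetric for any $r\le\eps$. The symmetry observation above, combined with the compactness of the collection of tangent flows, implies that for every $X\in\Sigma_k$ there exists $\eps(X)>0$ with $X\in\mathcal{S}^{n-k}_{\eps(X)}$; hence $\Sigma_k\subset\bigcup_{i=1}^{\infty}\mathcal{S}^{n-k}_{1/i}$.

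The remaining task---the technical core---is to show that the parabolic Hausdorff dimension of $\mathcal{S}^j_\eps$ is at most $j$ for each fixed $\eps>0$. This is the familiar covering/cone-splitting argument, adapted to the parabolic setting: Huisken's monotonicity formula makes the Gaussian density nearly constant along parabolic rescalings, and the key geometric lemma asserts that if one finds $j+2$ points $X_1,\dots,X_{j+2}$ in a parabolic ball, well-separated transverse to every affine $j$-plane, each of which is $(j+1,\delta,r)$-symmetric for $\delta\ll\eps$, then the flow on that ball is forced to be $(j+2,\eps,r)$-symmetric. Iterating this across dyadic scales shows that $\mathcal{S}^j_\eps$ can be covered at scale $r$ by at most $Cr^{-j}$ parabolic balls of radius $r$, yielding the dimension bound.

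I expect the cone-splitting step to be the main obstacle: the covering combinatorics are routine once it is in hand, but extracting an extra symmetry direction from approximate symmetries at nearby points requires a careful blow-up argument that combines Huisken's monotonicity formula with compactness of the class of mean convex Brakke flows whose singularities have convex type, together with the rigidity of the cylinders $\Ss^k\times\RR^{n-k}$ as self-shrinkers.
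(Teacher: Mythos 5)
Your proposal takes a genuinely different route from the paper's. The paper's ``proof'' of this proposition is a one-line citation to standard Federer/Almgren dimension reduction and to the (qualitative) stratification theory of \cite{White-Stratification}*{\S9}. You instead set up a \emph{quantitative} stratification in the Cheeger--Naber style: define quantitative strata $\mathcal{S}^j_\eps$ by $(j,\eps,r)$-almost-symmetry, reduce the claim to the bound $\dim_{\mathcal P}(\mathcal{S}^j_\eps)\le j$ for each fixed $\eps$, and attribute the rest to a covering-plus-cone-splitting argument. Both routes are valid, but the qualitative one is substantially shorter here precisely because the blow-ups are so rigid: every tangent flow at a point of $\Sigma_k$ is a shrinking $\Ss^k\times\RR^{n-k}$, whose own ``$\Sigma_k$'' is $\{0\}\times\RR^{n-k}$ at the terminal time and so has parabolic Hausdorff dimension exactly $n-k$; the standard blow-up argument for upper densities of parabolic Hausdorff measure (if $\mathcal{H}^\alpha_{\mathcal P}(\Sigma_k)>0$ with $\alpha>n-k$, some tangent flow would inherit a singular set of positive $\alpha$-measure) then gives the bound with no further machinery. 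Your quantitative route would buy strictly more---Minkowski-content estimates for the strata $\mathcal{S}^j_\eps$, not just Hausdorff dimension---but the price is exactly the cone-splitting lemma you flag as the main obstacle and do not prove. That lemma in the parabolic Brakke-flow setting is real work (it is the content of Cheeger--Haslhofer--Naber, which postdates this paper), so as written your proposal is an outline rather than a proof. One further point worth tightening: the inclusion $\Sigma_k\subset\bigcup_i\mathcal{S}^{n-k}_{1/i}$ requires not only that the tangent cylinder has exactly $n-k$ translational symmetries but a quantitative gap---that no rescaling can be arbitrarily close on the unit parabolic ball to an $(n-k+1)$-symmetric flow. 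You correctly name compactness of the space of tangent flows as the mechanism, but this contradiction argument (a sequence of almost-$(n-k+1)$-symmetric rescalings at shrinking scales would produce an $(n-k+1)$-symmetric limit, contradicting Proposition~\ref{Stone}) should be spelled out rather than left implicit.
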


This follows easily from standard dimension reducing.  (It also a special case
of the stratification theory in~\cite{White-Stratification}*{\S9}.)   
Actually, in this paper, we do not
need the full strength of Proposition~\ref{SingularSetSize}.  
 All we need is the following much weaker corollary:

\begin{corollary}\label{SingularSetSizeCorollary}
Suppose that at a certain time $t>0$, the singularities all have convex type with Gaussian density
 $\le d_k$.  Then $K(t)$ is a smooth $(n+1)$-manifold with boundary except for a closed
subset of $\partial K(t)$ whose Hausdorff dimension is at most $(n-k)$.
\end{corollary}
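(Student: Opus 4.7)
The plan is to read off Corollary~\ref{SingularSetSizeCorollary} directly from Proposition~\ref{SingularSetSize}, using Stone's proposition and the monotonicity $d_1>d_2>\cdots$ to cut down the relevant strata.  Let $S(t)\subset \partial K(t)$ denote the set of singular points of the flow at time $t$.  First I would verify the structural claim on $\partial K(t)\setminus S(t)$: at a regular point the flow is locally smooth in spacetime, so $\partial K(t)$ is locally a smooth $n$-manifold and $K(t)$ is locally a smooth $(n+1)$-manifold with boundary.  Closedness of $S(t)$ follows because the regular set of the flow is open in spacetime, so its slice at time $t$ is open in $\partial K(t)$.

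Next I would identify which strata $\Sigma_j$ from Proposition~\ref{SingularSetSize} can meet $\partial K(t)\times\{t\}$.  By hypothesis each $x\in S(t)$ has convex type, so Proposition~\ref{Stone} provides $j=k(x)\ge 1$ such that every tangent flow at $x$ is a shrinking $\Ss^j\times\RR^{n-j}$ of Gaussian density $d_j$.  The hypothesis $d_j\le d_k$ together with~\eqref{DensitiesDecrease} forces $j\ge k$; thus
\[
   S(t)\times\{t\}\;\subset\;\bigcup_{j\ge k}\Sigma_j.
\]

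Finally I would convert the parabolic Hausdorff dimension bound of Proposition~\ref{SingularSetSize} into an ordinary Hausdorff dimension bound on the time slice.  For each $j\ge k$, $\Sigma_j$ has parabolic Hausdorff dimension at most $n-j$, and the intersection of a spacetime set of parabolic dimension $d$ with a single time slice has ordinary Hausdorff dimension at most $d$ in space.  Hence each spatial slice $\Sigma_j\cap(\partial K(t)\times\{t\})$ has Hausdorff dimension $\le n-j\le n-k$, and taking the countable union over $j\ge k$ preserves this bound.  Therefore $S(t)$ has Hausdorff dimension at most $n-k$, completing the proof.

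The one step that is not pure bookkeeping is the slice-dimension statement, and that is where I expect the main (though still standard) subtlety.  It reduces to observing that a parabolic ball of radius $r$ meeting the slice $\{t\}$ intersects the slice in a set of Euclidean diameter at most $2r$, so any parabolic $(n-j)$-dimensional cover of $\Sigma_j$ restricts to a Euclidean $(n-j)$-dimensional cover of the spatial slice with the same gauge; everything else is a direct assembly of the quoted results.
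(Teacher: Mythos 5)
Your proof is correct, and it supplies exactly the details the paper leaves implicit: the paper states the corollary as an immediate consequence of Proposition~\ref{SingularSetSize} without writing out the argument. Your three steps (closedness of the singular set and local smoothness at regular points; the inclusion $S(t)\times\{t\}\subset\bigcup_{j\ge k}\Sigma_j$ via Proposition~\ref{Stone} and the monotonicity $d_1>d_2>\cdots$; the passage from parabolic to spatial Hausdorff dimension by slicing) are all sound. The slicing step could be stated even more simply: since the parabolic metric restricted to a fixed time slice coincides with the Euclidean metric, the slice $\Sigma_j\cap(\RR^{n+1}\times\{t\})$, being a subset of $\Sigma_j$, automatically has Euclidean Hausdorff dimension $\le n-j$; your covering argument proves the same thing. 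One minor remark: the union over $j\ge k$ is in fact finite, since $j\le n$, though of course a countable union would also preserve the dimension bound.
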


\begin{proposition}\label{MainBlowUpProposition}
Let $t>0$ and let $p\in \partial K(t)$ be a either a regular point or a convex-type singular point at which the
Gaussian density $\Theta(p)$ of the flow is $\le d_m$.   Let $x_i$ be a sequence of points in 
the interior of $K(t)$ that converge to $p$.   Let $y_i$ be a point in $\partial K(t)$
that minimizes distance to $x_i$.   Translate $K(t)$ by $-y_i$ and dilate by
$1/\dist(x_i,y_i)$ to get a set $K_i$.   Then a subsequence $K_{i(j)}$ converges to a convex
set $K'$ with smooth boundary, and the convergence $\partial K_{i(j)}\to \partial K'$ is smooth
on bounded sets.

Furthermore, the homotopy groups $\pi_j(\partial K')$ are trivial for $j< m$.
\end{proposition}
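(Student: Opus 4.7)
\emph{Proof plan.} The overall approach is a smooth blowup analysis, combining the one-sided inner-ball bound coming from $x_i$ with the convex-type structure at $p$, then using the tangent flow at $(p,t)$ to constrain the topology of $K'$.

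First, by Proposition~\ref{NearestPoint} each $y_i$ is regular, and $|y_i-p|\le|y_i-x_i|+|x_i-p|\le 2|x_i-p|\to 0$ gives $y_i\to p$. The inner ball $\BB(x_i,r_i)\subset K(t)$, tangent to $\partial K(t)$ at $y_i$, forces $\kappa_n(y_i)\le 1/r_i$, and hence $h(y_i)\le n/r_i$. Combined with the convex-type condition~\eqref{umbilicity} at $p$, which gives $\liminf_i\kappa_1(y_i)/h(y_i)\ge 0$, this gives an $O(1/r_i)$ bound on every principal curvature at $y_i$; extension to a bounded rescaled neighborhood follows from the standard non-collapsing of mean convex flow. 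After rescaling by $1/r_i$, the surfaces $\partial K_i$ therefore have uniformly bounded curvature on bounded subsets; parabolic regularity applied at the rescaled times $s<0$ (which correspond to smooth earlier times of the original flow) gives matching bounds on higher derivatives, so a subsequence of $\partial K_i$ converges smoothly on bounded sets to a smooth hypersurface $\partial K'$. The scale-invariant inequality $\kappa_1/h\ge -o(1)$ passes to the limit to yield $\kappa_1\ge 0$ on $\partial K'$, so $K'$ is convex; and $K'$ contains the unit ball $\BB(v,1)$ with $v=\lim(x_i-y_i)/r_i$.

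The topology of $K'$ is determined by its line space $L=\{e\in\RR^{n+1}:K'+e=K'\}$: writing $K'=\bar D\times L$ with $\bar D\subset L^\perp$ a convex body containing no line, one has $\partial K'\cong\partial\bar D\times L$, which is $\Ss^{n-\dim L}\times\RR^{\dim L}$ when $\bar D$ is bounded and is homeomorphic to $\RR^n$ when $\bar D$ is unbounded. Either way, $\pi_j(\partial K')=0$ for $j<n-\dim L$. If $p$ is regular then $\partial K'$ is a hyperplane and the conclusion is immediate; otherwise $p$ is a convex-type singularity with tangent flow $\Ss^l\times\RR^{n-l}$, and $d_l=\Theta(p)\le d_m$ forces $l\ge m$ by~\eqref{DensitiesDecrease}, so it suffices to prove $\dim L\le n-l$.

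The key step, and the main obstacle, is to show that every line direction $e\in L$ lies in the axis $\RR^{n-l}$ of the tangent flow at $p$. For $v\in K'$ with $v+\lambda e\in K'$ for all $\lambda\in\RR$, smooth convergence produces $w_i^{(\lambda)}\to v+\lambda e$ with $y_i+r_iw_i^{(\lambda)}\in K(t)\subset K(t')$ for every $t'<t$. Let $\tilde K_i(s)$ denote the parabolic rescaling of the flow around $(p,t)$ at scale $1/r_i$; at rescaled time $s=-1/(2l)$, $\tilde K_i(s)$ converges on compact sets to the filled cylinder $\bar\BB^{l+1}(1)\times\RR^{n-l}$. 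Setting $w_i=(y_i-p)/r_i$, first apply the inclusion $\BB((x_i-p)/r_i,1)\subset\tilde K_i(s)$, projected to $\RR^{l+1}$, to show that the $\RR^{l+1}$-component of $w_i$ is bounded and, along a subsequence, converges to $-v^{l+1}$. Then projecting $w_i+w_i^{(\lambda)}\in\tilde K_i(s)$ to $\RR^{l+1}$ yields $|\lambda e^{l+1}|\le 1$ in the limit for every $\lambda\in\RR$, forcing $e^{l+1}=0$, i.e.\ $e\in\RR^{n-l}$. The main difficulty is the case when the axial component $(w_i)^{n-l}$ is unbounded so that $w_i+w_i^{(\lambda)}$ escapes every fixed compact set; one then has to use the $\RR^{n-l}$-translation invariance of the tangent flow to re-center the constraint, which in turn requires justifying that the convergence $\tilde K_i(s)\to\bar\BB^{l+1}(\sqrt{-2ls})\times\RR^{n-l}$ holds uniformly in axial translates.
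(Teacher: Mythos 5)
Your proof takes a genuinely different route from the paper's, and the route you chose has a gap at exactly the point you flag.

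The paper handles the topology of $K'$ by an entropy argument, not a geometric one. It cites \cite{White-Nature} for the fact that $K'$ is (up to rotation) either an entire graph or $C\times\RR^{n-k}$ with $C$ a \emph{compact} convex body, and then observes that $\partial K'$ is a piece of a limit flow at $(p,t)$, so by Huisken's monotonicity its entropy is at most $\Theta(p)\le d_m$. Since $\partial K'=\partial C\times\RR^{n-k}$ develops an $\Ss^k\times\RR^{n-k}$ singularity under its own flow, its entropy is also at least $d_k$. Hence $d_k\le d_m$, so $k\ge m$. This argument is insensitive to how the cylinder axis of $K'$ is oriented relative to the axis of the tangent flow at $p$; it only uses the scale-invariant entropy bound, which survives every subsequential limit.

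Your approach instead tries to prove the stronger geometric statement that every line direction $e$ in the line space $L$ of $K'$ lies inside the axis $\RR^{n-l}$ of the tangent flow. The decomposition $K'=\bar D\times L$ and the topological consequence $\pi_j(\partial K')=0$ for $j<n-\dim L$ are fine, and the reduction to $\dim L\le n-l$ is correct (and indeed $k\ge l$ does hold, a fortiori, by the same entropy argument the paper uses, with $d_k\le\Theta(p)=d_l$). But the direct geometric proof you sketch is exactly where the difficulty lies, and you identify it yourself: the parabolic rescalings $\tilde K_i(s)$ at scale $1/r_i$ converge to the shrinking cylinder only \emph{locally uniformly near the origin}, whereas your points $w_i+w_i^{(\lambda)}$ can (and typically will) have axial coordinates that blow up as $i\to\infty$. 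Re-centering along the axis requires that the rescaled flow look cylindrical uniformly over an $O(1/r_i)$-long axial segment, which is a statement about the rate at which the flow becomes cylindrical away from the singular point; it does not follow from the convergence of the tangent flow alone, and establishing it would require an argument of comparable depth to the \L ojasiewicz-type uniqueness/decay results, none of which you invoke. This is a genuine gap, not a routine technicality. The entropy route avoids it entirely because entropy is a supremum over all centers and scales, so it controls the blow-up limit $K'$ regardless of where along the axis the base points $y_i$ drift.

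Two smaller remarks: the smooth convergence and convexity of $K'$ that you re-derive in your first paragraph is simply cited by the paper from \cite{White-Nature}, where it is proved carefully; your sketch (inner-ball bound, convex-type pinching, parabolic estimates) is plausible but compressed and does not by itself substitute for that reference. Also, the compactness of the cross-section $C$ in \cite{White-Nature}'s classification is part of what makes the paper's entropy computation clean; your more general $\bar D$ (possibly unbounded, line-free) works topologically but means you cannot directly invoke Huisken's theorem to compute the entropy of $\partial K'$ without first ruling out the unbounded case.
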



\begin{proof}
The assertion is trivially true if $p$ is a regular point (in that case, the set $K'$ is a closed halfspace),
so we assume that $p$ is a singular point.
Except for the assertion about homotopy groups, this is proved in~\cite{White-Nature}, which
also shows that, after a rotation, either
\begin{enumerate}[\upshape (i)]
\item\label{graph} $\partial K$ is the graph of an entire function from $\RR^n$ to $\RR$, or
\item\label{solid-cylinder} $K$ has the form $C\times \RR^{n-k}$ for some $k\ge 1$, where $C$ 
is a compact, convex subset of $\RR^{k+1}$.
\end{enumerate}
In the first case, all the homotopy groups of $\partial K$ are trivial.
Thus we may assume that $K$ has the form $C\times \RR^{n-k}$ as in~\eqref{solid-cylinder}.

Recall that the {\em entropy} of a hypersurface $M$ in $\RR^{n+1}$ is the supremum
of 
\[
    \frac1{(4\pi)^{n/2}r^n} \int_{y\in M} e^{-|y-x|^2/4r^2}\, d\HH^ny
 \]
 over all $x\in \RR^{n+1}$ and $r>0$.
Because $\partial K$ is a part of a limit flow at $(p,t)$, its entropy is at most the Gaussian
density of the original flow at the point $p$:
\begin{equation}\label{EntropyBound}
    \operatorname{Entropy}(\partial K) \le \Theta(p) \le d_m.
\end{equation}
(This follows easily from Huisken's monotonicity.)
On the other hand,  $\partial K$ forms an $\Ss^k\times \RR^{n-k}$ singularity
under mean curvature flow.  (If this not clear, apply Huisken's Theorem~\cite{Huisken1} to see
that the mean curvature flow starting with $C$ collapses to a round point, and then cross
that flow with $\RR^{n-k}$ to get a mean curvature flow starting with $K$ and collapsing
to a $(n-k)$-space with an $\Ss^k\times \RR^{n-k}$ singularity.)
By Huisken's monotonicity,
\[
   d_k \le \operatorname{Entropy}(\partial K),
\]
so $d_k \le d_m$  by~\eqref{EntropyBound}.
Thus $k\ge m$ (by~\eqref{DensitiesDecrease}), which implies that the $j^{\rm th}$ homotopy group of $\partial K$ 
 (which is diffeomorphic to $\Ss^k\times \RR^{n-k}$) is trivial for all $j<m$.
\end{proof}

\section{The Main Theorem}
We begin by recalling some topological terminology.
Suppose that $Y$ is a topological space and that $X$ is a subset of $Y$.  We write
\[
    F: (\BB^k, \partial \BB^k) \to (Y, X)
\]
to indicate that $F$ is a continuous map of the pair $(\BB^k, \partial \BB^k)$ into $(Y,X)$, i.e, a
continuous map of $\BB^k$ into $Y$ such that $F(\partial \BB^k) \subset X$.
Two such maps
\[
    F, G: (\BB^k, \partial \BB^k) \to (Y,X)
\]
are called homotopic in $(Y,X)$  provided there is a homotopy
$H:\BB^k\times [0,1]\to Y$ from $F$ to $G$ such that
\[
  H(\cdot, s) : (\BB^k, \partial \BB^k) \to (Y,X)
\]
for all $s\in [0,1]$.

We say that the pair $(Y,X)$ is {\bf $\mathbf{m}$-connected} if for every $k\le m$, every continuous map
\[
     F: (\BB^k, \partial \BB^k) \to (Y,X)
\]
is homotopic in $(Y,X)$ to a map $G$ whose image $G(\BB^k)$ lies in $X$.

We can now state the main theorem:

\begin{theorem}\label{main}
Let $t\in [0,\infty)\mapsto K(t)$ be a mean curvature flow of compact, mean convex subsets
of a Riemannian manifold $N$.
Suppose that $0<a < b$ and that
each singularity during the the time interval $a\le t < b$ has convex type and has
Gaussian density $\le d_m$,
the Gaussian density of a shrinking $m$-sphere in $\RR^{m+1}$. 

Then the pair $(K(b)^c, K(a)^c)$ is $m$-connected.
\end{theorem}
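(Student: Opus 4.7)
Fix $k \le m$ and a map $F: (\BB^k, \partial \BB^k) \to (K(b)^c, K(a)^c)$. The plan is to construct a homotopy $H: \BB^k \times [a, b] \to K(b)^c$, parameterized by a time $s$ decreasing from $b$ down to $a$, satisfying $H(\cdot, b) = F$, $H(\BB^k \times \{s\}) \subset K(s)^c$ at every stage, and $H(\partial \BB^k \times [a, b]) \subset K(a)^c$. Then $H(\cdot, a)$ will map into $K(a)^c$ and witness $m$-connectedness. Since $K(s)^c \subset K(b)^c$ for all $s \in [a, b]$, the entire homotopy automatically lives in $K(b)^c$, and keeping $F|_{\partial \BB^k}$ fixed throughout keeps the boundary in $K(a)^c$.

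Away from singular times, the construction is driven by nearest-point projection. If $F(y)$ has been engulfed by the shrinking region $K(s)$, then by Proposition~\ref{NearestPoint} the nearest point on $\partial K(s)$ is a regular boundary point, so a well-defined outward normal gives a direction in which to push $F(y)$ back into $K(s)^c$. On a time-interval free of singularities these normal pushes fit together into a smooth isotopy, so the partial homotopy extends through such an interval.

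The heart of the argument is crossing singular times. The hypothesis $\Theta \le d_m$, together with the monotonicity $d_1 > d_2 > \cdots$, forces every tangent flow to be a shrinking $\Ss^{k'} \times \RR^{n-k'}$ with $k' \ge m$, and by Proposition~\ref{MainBlowUpProposition} every blow-up boundary has $\pi_j = 0$ for $j < m$, so the complement of $K$ is locally highly connected at every singular point. Quantitatively, the singular set in spacetime has parabolic Hausdorff dimension at most $n - m$ (Proposition~\ref{SingularSetSize}); since $(k + 1) + (n - m) \le n + 1 < n + 2$ for $k \le m$, a general-position perturbation of the spacetime track $(y, s) \mapsto (H(y, s), s)$ should let me arrange that its image avoids the singular stratum entirely. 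Off that stratum $\partial K$ is a smooth evolving hypersurface, so the nearest-point retraction of the previous paragraph applies.

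The main technical obstacle I foresee is gluing these local ingredients into a single continuous homotopy that simultaneously (i) keeps $H(\cdot, s)$ inside $K(s)^c$, (ii) keeps the boundary inside $K(a)^c$, and (iii) is compatible with the transversality perturbation across singular slices. I would handle it by a time-discretization $a = s_0 < s_1 < \cdots < s_N = b$ chosen finely relative to the geometry of the flow on each subinterval, combined with an induction in which at step $i$ the current map is homotoped from $K(s_i)^c$ into $K(s_{i-1})^c$ by a perturbation small enough to remain inside $K(b)^c$. Proposition~\ref{MainBlowUpProposition} and Corollary~\ref{SingularSetSizeCorollary} are the essential ingredients that allow the $k$-cell to be locally untangled from, and transversally pushed past, any cylindrical neck developing at a singular time.
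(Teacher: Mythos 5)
Your high-level strategy—homotope the $k$-cell backward through time, staying outside $K(s)$ at each stage, and handle singular slices by combining the blow-up connectivity of Proposition~\ref{MainBlowUpProposition} with the dimension bound of Corollary~\ref{SingularSetSizeCorollary}—is in the right spirit and identifies the correct ingredients. But there is a genuine gap at the one place where all the difficulty in the paper actually lives: pushing a map out of $K(T)$.

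You rely on ``nearest-point projection'' to push a point $F(y)$ inside $K(s)$ back out to $\partial K(s)$, and you assert that these pushes fit together into a continuous (even smooth) isotopy. They do not, even on time intervals free of singularities. The nearest-point map onto a hypersurface is multivalued (cut locus), and the set of points with non-unique nearest boundary point can be everywhere dense in the interior of $K(s)$ even when $\partial K(s)$ is smooth. Proposition~\ref{NearestPoint} only tells you that \emph{a} nearest point is regular; it gives no continuity of the selection. So the central mechanism of your homotopy is undefined. The paper's entire Lemma~\ref{lemma} and the surrounding ``$F$-optimal'' construction exist precisely to solve this: one triangulates the region of $\BB^k$ that maps into $\interior(K(T))$, projects vertices to nearest boundary points (this is where Proposition~\ref{NearestPoint} is used, and only on a $0$-dimensional set, where continuity is no issue), and then extends inductively over skeleta by Lipschitz-minimizing maps into $\partial K(T)$; the existence and Lipschitz control of those extensions is what the blow-up compactness and the $\pi_{j}(\partial K')=0$ statement ($j<m$) are used to prove. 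Your proposal cites the blow-up connectivity but never feeds it into an extension argument, which is where it is actually needed. Without something like Lemma~\ref{lemma}, the argument does not close.

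A secondary issue: you propose a general-position perturbation of the full spacetime track $(y,s)\mapsto(H(y,s),s)$ to avoid the singular stratum. This is more than is needed and also harder to carry out, because you would have to achieve transversality while simultaneously preserving the nested constraints $H(\BB^k\times\{s\})\subset K(s)^c$ across all $s$. The paper avoids this: it runs an open-closed argument on the set $J$ of times $t$ for which some map in the homotopy class misses $K(t)$, and at a single critical time $T$ puts only the spatial map $F$ (taken from some nearby $T^*\in J$) in general position relative to $\sing(\partial K(T))$, using the count $k+\dim\sing(\partial K(T))\le k+(n-m)\le n<n+1$. This is a much cleaner reduction and sidesteps the spacetime transversality question entirely. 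If you want to pursue your continuous-in-time picture, you would still, at the bottom, need the same skeleton-by-skeleton Lipschitz extension to replace nearest-point projection.
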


In particular, the conclusion implies that if a map of $\Ss^k$ (for $k<m$) into $K(a)^c$  is 
contractible in $K(b)^c$, then it is also contractible in $K(a)^c$.  Thus Theorem~\ref{main}
implies Theorem~\ref{IntroTheorem}.

Recall (Proposition~\ref{ConvexType}) that if $N=\RR^{n+1}$ and $\partial K(0)$ is smooth, or
if $n< 7$, then the singularities all have convex type.

Note also that if $\partial K(0)$ is smooth, then we can also allow $a=0$ in Theorem~\ref{main}, 
because the topology cannot
not change before the first singular time.

\begin{proof}[Proof of Theorem~\ref{main}]
The theorem follows immediately from Theorem~\ref{main-abstract} and 
Proposition~\ref{Q-density-relation} below.
\end{proof}

\section{An Abstract Form of the Main Theorem}\label{abstract-form-section}

In this section, we state and prove an abstract version (Theorem~\ref{main-abstract})
of Theorem~\ref{main}.
The abstract version is no harder to prove than the special case, but it has the advantage
of also applying to some variants of mean curvature flow.
For example, for $n<7$, it also applies  
to regions $t\mapsto K(t)$ in $\RR^{n+1}$ (or in an $(n+1)$-dimensional Riemannian
manifold) whose boundaries 
evolve with velocity
given by 
\begin{equation}\label{forcing-term}
\vv = H  + F^\perp
\end{equation}
where the drift $F$ is a smooth, time-independent vectorfield on the ambient
space.  The relevant analog
of mean convexity is the condition that at time $0$, the velocity $\vv$ is everywhere nonzero
and points into the region $K(0)$. 

(Existence of a set-theoretic solution 
of~\eqref{forcing-term} may be proved as in~\cite{Ilmanen-levelsetmanifold} or~\cite{White-Topology}.  
Existence of an associated flow of varifolds may be proved by the 
methods of Evans-Spruck~\cite{ES}, 
which are adapted to general ambient manifolds in~\cite{Ilmanen-levelsetindiana}. In particular,
as in \cite{Ilmanen-levelsetindiana}*{\S5}, one gets the set-theoretic flow by taking a limit of of flows
of moving graphs in $N\times\RR$ and then slicing to get the flowing sets in $N$.
  Taking the corresponding varifold limits gives the appropriate varifold flow
in $N\times\RR$, from which one gets the varifold flow in $N$ by slicing.
Once one knows (or assumes) existence of the set-theoretic flow and of the associated flow
of varifolds, the partial regularity proofs in~\cite{White-Size} 
carry over with only minor and straightforward modifications, 
as does the proof in~\cite{White-Nature}  that the singularities
have convex type when $n<7$.  The proof in~\cite{White-Subsequent} of convex-type in 
 $\RR^{n+1}$ 
 for $n\ge 7$ is more delicate and does
not seem to generalize to flows~\eqref{forcing-term} with a drift term.)

\begin{definition}
Suppose $K$ is a closed set in the interior\footnote{For now, the reader may as well assume that
the ambient manifold has no boundary.  In \S5, we will consider sets $K$ that contain
a portion of the boundary of the ambient manifold.}
of a smooth $(n+1)$-dimensional Riemannian manifold.
A point in $K$ is a {\bf regular} point of $K$ if it is an interior point of $K$ or if it is a boundary
point with a neighborhood $U$ such that $K\cap U$ is smoothly diffeomorphic to a closed halfspace
in $\RR^{n+1}$.
The {\bf singular set} $\sing(K)$ of $K$ is the set of points in $K$ that are not regular points
of $K$.
\end{definition}

Note that the singular set of $K$ is a closed subset of $\partial K$.

The quantity $Q(K)$ introduced in the following definition may at first seem peculiar, but
for mean convex mean curvature flow, the condition $Q(K(t))\ge m$ is closely related to 
the condition ``the singularities at $t$ have Gaussian densities $\le d_m$". 
See Proposition~\ref{Q-density-relation} and the discussion following.

\begin{definition}\label{Q-definition}
Suppose $K$ is a closed set in the interior of a smooth $(n+1)$-dimensional Riemannian
manifold.
We define $Q(K)$ to be the largest integer $m$ with the following properties:
\begin{enumerate}[\upshape (a)]
\item\label{Qa} The singular set $\sing(K)$ has Hausdorff dimension $\le n-m$.
\item\label{Qb}
  Let $x_i$ be a sequence of points in the interior of $K$
   converging to a point in $\partial K$.
 Translate $K$ by $-x_i$ and dilate by $1/\dist(x_i, \partial K)$ to get $K_i$.
 Then a subsequence of the $K_i$ converges to a convex subset $K'$ of $\RR^{n+1}$ with
  smooth boundary, and the convergence is smooth on bounded sets.  
\item\label{Qc} If $K'$ is as in~\eqref{Qb}, then $\partial K'$ 
   has trivial $k^{\rm th}$ homotopy for every $k < m$.
\end{enumerate}
If no such integer exists, we let $Q(K)=-\infty$.
\end{definition}

Note that~\eqref{Qb} implies
\begin{equation}
\begin{aligned}\label{closest}
 &\text{If $x\in\operatorname{interior}(K)$ and if $y$ is point in $\partial K$ closest to $x$,}
 \\
&\text{then $y$ is a regular point of $K$.}
\end{aligned}
\end{equation}
(If this is not clear, consider a sequence of points $x_i$ lying on the geodesic between $x$ and $y$ and converging to $y$.  The limit set $K'$ in~\eqref{Qb} must be a halfspace, 
and the smooth convergence in~\eqref{Qb}
then implies that $y$ is a regular point.)

Note also that if $K$ has no interior, then~\eqref{Qb} and~\eqref{Qc} are vacuously true, 
and $\sing(K)=K$, so in that case
$Q(K)$ is the largest integer less than or equal to  $n-\dim(\sing(K))$.

The following proposition describes for mean curvature flow
how $Q(K(t))$ is related to the
Gaussian densities of the singularities at time $t$:

\begin{proposition}\label{Q-density-relation}
Let $t\in [0,T] \mapsto K(t)$ be a mean curvature flow of mean-convex regions in the interior of a smooth Riemannian
$(n+1)$-manifold.   If $t\in (0,T]$ and if the singularities at time $t$ all have convex type
with Gaussian densities $\le d_m$, then $Q(K(t))\ge m$.
\end{proposition}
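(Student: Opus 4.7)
The plan is to verify the three clauses (a), (b), (c) in Definition~\ref{Q-definition} separately for $K = K(t)$, citing the earlier propositions as directly as possible.

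For clause (a), the hypothesis is exactly that of Corollary~\ref{SingularSetSizeCorollary}: every singular point of $K(t)$ has convex type with Gaussian density $\le d_m$. That corollary gives $\dim_{\mathcal H}\sing(K(t))\le n-m$, which is precisely (a).

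For clauses (b) and (c), I would reduce essentially verbatim to Proposition~\ref{MainBlowUpProposition}, modulo two small bookkeeping items. Fix $x_i\in\operatorname{interior}(K(t))$ converging to $p\in\partial K(t)$, let $d_i=\dist(x_i,\partial K(t))$, and pick $y_i\in\partial K(t)$ with $|x_i-y_i|=d_i$; then $y_i\to p$. Proposition~\ref{MainBlowUpProposition} (whose hypothesis $\Theta(p)\le d_m$ is supplied by our assumption, together with the trivial regular case) asserts that the sets $(K(t)-y_i)/d_i$ subconverge to a convex $K''$ with smooth boundary (smoothly on bounded sets). The Definition~\ref{Q-definition} rescaling $(K(t)-x_i)/d_i$ differs from $(K(t)-y_i)/d_i$ by the translation $(y_i-x_i)/d_i$, a unit vector; passing to a further subsequence so this translation converges to some $e\in\RR^{n+1}$, the limit $K'=K''-e$ is again convex with smooth boundary, verifying (b).

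Clause (c) follows by inspecting the two cases in Proposition~\ref{MainBlowUpProposition}. If $\partial K''$ is an entire graph over $\RR^n$, then $\partial K'\approx \RR^n$ has trivial homotopy in all positive dimensions. Otherwise $K''=C\times\RR^{n-k}$ for some compact convex $C\subset\RR^{k+1}$ with smooth boundary, so $\partial K'\cong\partial C\times\RR^{n-k}\cong \Ss^k\times\RR^{n-k}$, and the last paragraph of the proof of Proposition~\ref{MainBlowUpProposition} shows $k\ge m$; thus $\pi_j(\partial K')=\pi_j(\Ss^k)=0$ for all $j<m$. (The regular-point case of $p$ is trivially covered: $K'$ is a halfspace.)

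There is no real obstacle here; the result is essentially a repackaging of Corollary~\ref{SingularSetSizeCorollary} and Proposition~\ref{MainBlowUpProposition} into the abstract framework of Definition~\ref{Q-definition}. The only mild care is in noting that the two propositions use slightly different normalizations (nearest-point-translation versus base-point-translation), and that the blow-up limit is required to have $\partial K'$ with trivial homotopy below dimension $m$ rather than the density bound $d_k\le d_m$ directly; both are handled as above.
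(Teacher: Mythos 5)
Your proof is correct and takes essentially the same route as the paper, which simply cites Proposition~\ref{NearestPoint}, Corollary~\ref{SingularSetSizeCorollary}, and Proposition~\ref{MainBlowUpProposition} and says the result follows immediately. The only substantive thing you add is the explicit bookkeeping between the $-x_i$ normalization in Definition~\ref{Q-definition} and the $-y_i$ normalization in Proposition~\ref{MainBlowUpProposition}, which the paper glosses over; your handling of it is right (apart from a harmless sign on $e$, since $(K-x_i)/d_i=(K-y_i)/d_i+(y_i-x_i)/d_i$ gives $K'=K''+e$).
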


\begin{proof} The result follows immediately from Proposition~\ref{NearestPoint}, 
Corollary~\ref{SingularSetSizeCorollary}, and Proposition~\ref{MainBlowUpProposition}.
\end{proof}

For mean convex mean curvature flow, 
$Q(K(t))$ will typically equal the smallest $m$ such that there is a singularity at time $t$ with Gaussian
density $d_m$.  However, there are degenerate situations in which $Q(K(t))$ is strictly less than that $m$.
For example, at the singular time for the doubly-degenerate neckpinch in $\RR^3$
 mentioned in the introduction, $K(t)$ is a single point and thus $Q(K(t))=2-0=2$, but
the Gaussian density at that singularity is $d_1$, not $d_2$.

\begin{theorem}\label{main-abstract}
Let $t\in [a,b]\mapsto K(t)$ be a one-parameter family of compact sets in the interior of a smooth, Riemannian 
$(n+1)$-manifold such that
\[
    \text{$K(t)\subset K(T)$ for $a\le T\le t < b$}
\]
and such that the boundaries $M(t):=\partial K(t)$ form a partition of 
$K(a)\setminus \interior(K(b))$.
Assume that 
\begin{equation}\label{Q-hypothesis-main-new}
  \text{$Q(K(T))\ge m$ for each $T\in [a,b)$}.
\end{equation}
Then the pair $(K(b)^c, K(a)^c)$ is $m$-connected.
\end{theorem}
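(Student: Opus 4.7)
The plan is: given a continuous map $F:(\BB^k,\partial\BB^k)\to(K(b)^c,K(a)^c)$ with $k\le m$, construct a pair-homotopy of $F$ to a map whose image lies entirely in $K(a)^c$. The key geometric input is the partition of $K(a)\setminus\interior(K(b))$ by the $M(t)=\partial K(t)$, which yields a time function $\tau$ with $\tau(p)=t$ iff $p\in M(t)$, extended by $\tau\equiv a$ on $K(a)^c$. Away from the ``spacetime singular locus''
\[
\Sigma:=\bigcup_{T\in[a,b]}\sing(K(T)),
\]
property (b) of Definition~\ref{Q-definition} (together with the consequence~\eqref{closest}) shows that the foliation $\{M(t)\}$ is smooth, so its outward unit normal $\nu$ generates a continuous flow $\phi_s$ whose trajectories strictly decrease $\tau$ and therefore exit $K(a)$ in finite time without ever entering $\interior(K(b))$. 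This flow will be the engine of the homotopy.

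To carry this out I will first smoothly approximate $F$ (rel $\partial\BB^k$). By Definition~\ref{Q-definition}(a), $\dim\sing(K(T))\le n-m$ for every $T\in[a,b)$, so $\Sigma$ has Hausdorff dimension at most $n-m+1$ in the ambient $(n+1)$-manifold. A standard transversality argument lets me perturb $F$ so that $F(\BB^k)\cap\Sigma$ has dimension at most $k-m$: empty when $k<m$ and a finite set of points when $k=m$. In the easier case $k<m$, the perturbed $F$ avoids $\Sigma$ altogether, and composing with a cut-off version of $\phi_s$ (extended by the identity outside $K(a)\setminus\interior(K(b))$) yields the desired pair-homotopy: the boundary $F(\partial\BB^k)$ stays in $K(a)^c$ because the flow fixes that set, while the whole image stays in $K(b)^c$ because the flow only decreases $\tau$.

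The critical case is $k=m$, where finitely many bad points $p_0\in F(\BB^m)\cap\sing(K(t_0))$ may survive any perturbation. At each such $p_0$, Definition~\ref{Q-definition}(b)--(c) supplies a blowup limit $K'$ which is a convex smooth region with $\pi_j(\partial K')=0$ for all $j<m$. Using the smooth convergence of the rescalings, I will perform a local surgery in a small neighborhood $U$ of each $p_0$: the restriction $F|_{F^{-1}(U)}$ looks, after rescaling, like an $m$-disk crossing the boundary of the blowup region, and the $(m-1)$-connectedness of $\partial K'$ lets me replace this disk by one that avoids the origin (hence, at unit scale, avoids $p_0$) while staying in $K(b)^c$. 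After performing this surgery at each of the finitely many bad points, $F$ misses $\Sigma$, and the flow argument of the previous paragraph finishes the proof. The main obstacle I anticipate is precisely this local surgery step: one must transfer the abstract $m$-connectedness in the blowup back to a genuine pair-homotopy at unit scale, keeping the image in $K(b)^c$ and introducing no new intersections with $\Sigma$ elsewhere.
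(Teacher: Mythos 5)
Your proposal is architecturally different from the paper's proof, and it contains several gaps that I don't see how to repair within your framework.

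\textbf{The flow $\phi_s$ does not exist in this abstract setting.}  You want to flow along the outward unit normal of the ``foliation'' $\{M(t)\}$, with trajectories decreasing $\tau$.  But the hypotheses of Theorem~\ref{main-abstract} say only that the $M(t)$ are disjoint and partition $K(a)\setminus\interior(K(b))$; they give a continuous time function $\tau$ (Remark~\ref{time-function}) but nothing more.  Definition~\ref{Q-definition} concerns blow-ups of a single $K(T)$: it tells you that away from $\sing(K(T))$ the set $\partial K(T)$ is a smooth hypersurface, but it says nothing about how $M(t)$ varies in $t$.  In particular there is no guarantee of a Lipschitz, let alone smooth, transverse flow, and $\tau$ need not even be Lipschitz.  (In the mean curvature flow specialization this structure does exist, but the whole point of the abstract formulation is to avoid relying on it; and the paper's proof uses only a fixed-time push off the regular part of a single $\partial K(T)$, not a flow through the family.)

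\textbf{The dimension bound on $\Sigma=\bigcup_T\sing(K(T))$ does not follow from the hypotheses.}  You assert $\dim_{\HH}\Sigma\le n-m+1$ because each $\sing(K(T))$ has dimension $\le n-m$.  That inference is false in general: a disjoint union over an uncountable parameter of sets of dimension $\le n-m$ can have arbitrarily large dimension (every set is a union of points).  For mean curvature flow one can bound the \emph{spacetime} singular set via Proposition~\ref{SingularSetSize}, but again that is not available here.  The paper sidesteps the issue by never forming $\Sigma$: it works with the singular set of a \emph{single} $\partial K(T)$ at a time.  There $\dim\sing(\partial K(T))+k\le(n-m)+m=n<n+1$, and the transversality argument in the Appendix applies cleanly.

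\textbf{The local surgery is the crux and is not carried out.}  Even granting the flow and the dimension count, you flag the $k=m$ surgery as ``the main obstacle'' and give no construction.  In the paper this is precisely where the real content lies: Lemma~\ref{lemma} builds an $F$-optimal simplicial map into $\partial K(T)$ by induction on skeleta, using at each step a compactness/blow-up argument to pass to the smooth convex limit $K'$ and then the assumption $\pi_j(\partial K')=0$ for $j<m$ to extend across the next simplex with controlled Lipschitz constant.  The Lipschitz bound (and the resulting $C^0$ estimate \eqref{diam-of-union}) is what lets one glue the new map to the old one and confine the homotopy inside $K(b)^c$.  Your proposal has nothing playing the role of this quantitative control, and the surgery ``transfer back to unit scale'' you mention is exactly the step that needs it.

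In short: the paper runs a continuity argument in the time variable, fixing one $T$ at a time and replacing $F$ by a simplicial approximation mapping into $\reg(\partial K(T))$; your proposal tries instead to make $F$ avoid the whole spacetime singular locus at once and then flow it out.  The first two gaps above show the second strategy cannot be carried out under the stated hypotheses, and the third gap leaves the key step unproved even in the favorable cases.
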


\begin{remark}\label{time-function}
Note that the hypotheses in the first sentence of the theorem imply
that there is a continuous function
$
   \tau: K(a) \to \RR
$
such that
\begin{align*}
&\text{$M(t) =\{x: \tau(x)=t\}$ for $t\in [a,b)$, and} \\
&\text{$K(t)  =\{x: \tau(x)\ge t\}$ for $t\in [a,b]$.}
\end{align*}
If $\tau(x)\in [a,b]$, then $\tau(x)$ is the time at which the moving surface
$\partial K(t)$ passes through the point $x$.
\end{remark}

\begin{proof}
Let $k\le  m$ and let
\[
   F_0: (\BB^k, \partial \BB^k)  \to  (K(b)^c, K(a)^c).
\]
be a continuous map.
Let $\FF$ be the set of all continuous maps 
\[
  F : (\BB^k, \partial \BB^k) \to (K(b)^c, K(a)^c)
\]
such that $F$ is homotopic in $(K(b)^c, K(a)^c)$ to $F_0$.
We must show that $\FF$ contains a map whose image lies in $K(a)^c$, i.e.,
a map whose image is disjoint from $K(a)$.

Equivalently, if $J$ is the set of $t\in [a,b]$ such that $\FF$ contains a map $F$ whose image is
disjoint from $K(t)$, then we must show that $a\in J$.

We will prove that $J=[a,b]$ (and therefore that $a\in J$) by proving the following
four statements:
\begin{enumerate}[\upshape (i)]
\item\label{first} $b\in J$.
\item\label{second} $J$ is a relatively open subinterval of $[a,b]$.
\item\label{third} If $T$ is in the closure of $J$, then $\FF$ contains a map $F$ whose image
 is contained in the union of $K(T)^c$ and the regular part of $\partial K(T)$.
\item\label{fourth} If $T$ is in the closure of $J$, then $T\in J$.
\end{enumerate}
Statements~\eqref{first}, \eqref{second}, and~\eqref{fourth} 
imply that $J$ is a nonempty subinterval of $[a,b]$ that is both open and closed in 
 $[a,b]$,
and therefore that $J$ is all of $[a,b]$, as desired.

Statement~\eqref{first} is trivially true (since $F_0\in \FF$ and $F_0(\BB^k)$ is disjoint from $K(b)$.)

Next we prove statement~\eqref{second}.  For  $F\in \FF$, let
\[
    J_F:=\{t\in [a,b]: F(\BB^k)\cap K(t) = \emptyset\}.
\]
Note that 
\begin{equation}
    J = \cup_{F\in \FF} J_F.
\end{equation}
By definition of $\FF$, the set $J_F$ contains $b$.
Since the $K(t)$'s are nested, if $a\le t\le t' \le b$ and if $t$ is in $J_F$, then $t'$ is also in $J_F$.
Thus $J_F$ is an interval containing $b$.
We claim that $J_F$ is relatively open in $[a,b]$.  If $a\in J_F$, then $J_F=[a,b]$, which is certainly
relatively open in $[a,b]$.  Thus suppose $a\notin J$, i.e., that $K(a)$ intersects $F(\BB^k)$.
Note that there is a last time $t$ such that $K(t)$ intersects $F(\BB^k)$. (Indeed, $t=\max\{\tau(x): x\in F(\BB^k)\}$,
where $\tau(\cdot)$ is the function in Remark~\ref{time-function}.)
Then $J_F=(t,b]$, which is relatively open in $[a,b]$.  We have shown that each $J_F$
is a relatively open subinterval of $[a,b]$ containing $b$.   Hence their union $J$
is also such a subinterval of $[a,b]$.  This proves statement~\eqref{second}.

Next we observe that statement~\eqref{third} implies statement~\eqref{fourth}.  
For suppose $T$ is in the closure of $J$.
Then, assuming that statement~\eqref{third} holds, 
 $\FF$ contains a map $F$ that lies in the union of $K(T)^c$ with the regular
part of $\partial K(T)$.
Now we simply push $F(\BB^k)$ into $K(T)^c$ by pushing it (where it touches the regular part
of $\partial K(T)$) in the direction of the outward unit normal to $K(T)$.
Thus $T\in J$, which completes the proof
that statement~\eqref{third} implies statement~\eqref{fourth}.

(The sentence ``now we simply push\dots"  may be made more precise as follows.
Let $S= F(\BB^k)\cap \partial K(T)$.   Let $\vv$ be a smooth, compactly supported vectorfield 
defined on the regular part of $\partial K(T)$ such that $\vv$ is nonzero at every point of $S$
and such that at each point, $\vv$ is a nonnegative multiple of the outward unit normal to $\partial K(T)$.
  Now extend $\vv$ to be a smooth vectorfield on the ambient space that vanishes outside of $K(a)$.
The flow generated by $\vv$ homotopes $F$ to a map in $\FF$ whose
image is disjoint from $K(T)$.)

It remains only to show statement~\eqref{third}.  Suppose $T\in [a,b)$ is in the closure of $J$. 
Let $\eps>0$ (to be specified later).  By statements~\eqref{first} and~\eqref{second},
 there exist $T^*\in J\cap(T,b]$ arbitrarily close
to $T$.   Choose such a $T^*$ sufficiently close to $T$ that every point in $K(T)\setminus K(T^*)$
is within distance $<\eps$ of $\partial K(T)$.  
(This is possible by the continuity of the function $\tau(\cdot)$ in Remark~\ref{time-function}.)

Since $T^*\in J$, there is a map $F\in \FF$ such that $F(\BB^k)$ is disjoint from $K(T^*)$.   
We may assume that $F$ is smooth since the $C^\infty$ maps are dense in the set of continuous maps.
Now
\[
    \dim(\sing(K(T))) \le n- Q(K(T)) \le n-m,
\]
and therefore since $k\le m$,
\[
   \dim(\sing(\partial K(T))) + k \le n < n+1.
\]
Consequently, we may assume, by putting $F$ in general position, that $F(\BB^k)$
contains no singular points of $\partial K(T)$.   (See the appendix if this is not clear.)

We will construct a map $G$ from $\BB^k$ such that the image  of $G$ is contained in $K(T)^c$
together with the regular part of $\partial K(T)$.   We will also construct a homotopy
 from $F$ to $G$ in $(K(b)^c, K(a)^c)$.  The homotopy shows that $G\in \FF$, 
thus establishing statement~\eqref{third}.

Let $\Omega\subset \BB^k$ be the inverse image under $F$ of the interior of $K(T)$.
We now describe the construction of the map $G$ on the open set $\Omega$.

First some terminology.  
Recall that  a $d$-simplex is the convex hull of $(d+1)$ points in a Euclidean space provided
those $(d+1)$ points do not lie in any affine subspace of dimension $<d$.
The points are called vertices of the simplex.
If the distance between each pair of vertices is $1$, we say that the
simplex is {\bf standard}.  
Note that any two $d$-simplices are affinely isomorphic.
In particular, given any $d$-simplex $\Delta$, there is an affine bijection $\sigma: \Delta \to \Delta_s$
from $\Delta$ to a standard simplex $\Delta_s$. 
 We define the {\bf standardized distance} $d_s(\cdot, \cdot)$ 
on $\Delta$ by
\[
      d_s(x,y) = | \sigma(x) - \sigma(y)|.
\]
Given a map $F$ from $\Delta$ into a metric space $Z$, we define the {\bf standardized Lipschitz
constant} $\Lip_s(F)$ of $F$ to be the Lipschitz constant of $F$ with respect to the standardized
distance on $\Delta$:
\[
    \Lip_s(F)
    =
    \sup_{x\ne y} \, \frac{ \dist( F(x), F(y) )}{  d_s(x,y) }.
\]

We now describe the map $G$ on the portion $\Omega$ of $\BB^k$.  (Later we will extend
$G$ to all of $\BB^k$ by letting $G=F$ on $\BB^k\setminus \Omega$.)
First, triangulate $\Omega$.
By refining the triangulation, we may assume that
for each simplex $\Delta$ of the triangulation,
\[
    \diam(F(\Delta)) < \eps \, \dist(F(\Delta), \partial K(T)).
 \]
 Here $\dist(X,Y)$  denotes the infimum of $\dist(x,y)$ among all $x\in X$ and $y\in Y$.

We define $G$ on $\Omega$ inductively by defining it first
on the $0$-skeleton of the triangulation of $\Omega$, then
on the $1$-skeleton, and so on.
For each vertex $v$ in the $0$-skeleton, we choose a point $q\in \partial K(T)$ that minimizes
$\dist(q, F(v))$, and we then let $G(v)$ be that chosen $q$.  
Note that $q$ is a regular point of $\partial K(T)$ by~\eqref{closest}.
Having defined $G$ on the $(j-1)$-skeleton of $\Omega$, we extend it to the $j$-skeleton as follows.  For each
$j$-simplex $\Delta$ in the triangulation, we choose a map 
\[
   g: \Delta \to \partial K(T)
\]
that minimizes $\Lip_s(g)$ among all maps $g:\Delta \to \partial K(T)$ such
that $g=G$ on $\partial \Delta$.  Having chosen such a $g$, we let $G(x)=g(x)$
for $x\in \Delta$.  (In Lemma~\ref{lemma} below, any map $G$ constructed by this inductive procedure will be called 
   ``$F$-optimal".)

Of course we must check that the procedure does not break down in going from the $(j-1)$-skeleton
to the $j$-skeleton.
 That it does not break down is proved below in Lemma~\ref{lemma} (provided $\eps>0$ is sufficiently small).
The lemma shows (for all sufficiently small $\eps>0$) that:
\begin{enumerate}[\upshape (1)]
\setcounter{enumi}{\value{equation}}
\item  $G(\Omega)$ lies in the regular part of $\partial K(T)$, and
\item\label{C^0-close} $\dist(F(x),G(x)) \le C \dist (F(x), \partial K(T))$ for all $x\in \Omega$.  (See~\eqref{diam-of-union}
  in the lemma.)
\setcounter{equation}{\value{enumi}}
\end{enumerate}

By~\eqref{C^0-close}, the map $G$ extends continuously to $\BB^k$ 
 by setting $G(x)=F(x)$ for $x\in \BB^k\setminus \Omega$.

Now define a homotopy $H:\BB^k\times[0,1]\to K$ from $F$ to $G$ by setting
\[
   H(x,s) = (1-s)F(x) + sG(x)
\]
if the ambient space is Euclidean.   More generally, we define $H$ by letting $H(x, \cdot): [0,1] \to K(T)$ be the unique shortest 
geodesic (parametrized with constant speed) joining $F(x)$ to $G(x)$.
(By~\eqref{C^0-close}, the shortest geodesic will be unique if $\eps>0$ is sufficiently small, since 
         $\dist(F(x),\partial K(T))<\eps$.)

It remains only to show that (if $\eps>0$ is sufficiently small) the image of $H$ is disjoint from $K(b)$,
i.e., that for $x\in \Omega$, the geodesic from $F(x)$ to $G(x)$ is disjoint from $K(b)$.
Choose $\eps$ with
\[
 0<\eps < \frac{\dist(\partial K(T), K(b))}{C}.
\]
(This is possible since $\partial K(T)$ and $K(b)$ are disjoint.)
Thus by~\eqref{C^0-close},
\[
  \dist(F(x), G(x)) < \dist(\partial K(T), K(b)).
\]
This means that the geodesic from $G(x)$ (which is in $\partial K(T)$) to $F(x)$ is too short to reach $K(b)$.
Thus that geodesic is disjoint from $K(b)$.

We have proved that the image of the homotopy $H$ is disjoint from $K(b)$.
The homotopy proves that $G\in \FF$.  This completes the proof of  Theorem~\ref{main-abstract}.
\end{proof}

We now turn to the lemma that was used in the proof of Theorem~\ref{main-abstract}.
First we need some terminology.
Fix a $T>0$ and let $K=K(T)$.
Let $X$ be a simplicial complex and let $F$ be a map from $X$ to $K$.
We say that a map
$G: X\to \partial K$ is {\bf $F$-optimal}
 provided:
\begin{enumerate}
\item For each vertex $v$ of $X$, $G(v)$ realizes the minimum distance from a point in $\partial K$ to $F(v)$:
\[    
     \dist(F(v), \partial K) = \dist(F(v), G(v)).
\]
\item For each simplex $\Delta$ of $X$, the restriction $G|\Delta$ is a $\Lip_s$-minimizing map
from $\Delta$ to $\partial K$.  That is, if $g:\Delta\to \partial K$ is any map
such that $g|\partial \Delta=G|\partial \Delta$, then
\[
  \Lip_s(G|\Delta) \le \Lip_s(g).
\]
\end{enumerate}

\begin{lemma}\label{lemma}
Let $K$ be a compact subset of the interior of a smooth, $(n+1)$-dimensional manifold.
Let $\Delta$ be a simplex of dimension $k \le  Q(K)$.
Then there is an $\eps>0$ and a $C<\infty$ with the following property.
If $F: \Delta \to K$ is a map such that
\[
   \diam(F(\Delta))  <  \eps \dist(F(\Delta), \partial K) 
\]
and such that
\[
   \dist(F(\Delta), \partial K) < \eps,
\]
then each $F$-optimal map from $\partial \Delta$ to $\partial K$ extends
to an $F$-optimal map $G$ from $\Delta$ to $\partial K$, and for any such extension
$G$, 
\[
\Lip_s(G)   \le C  \, \dist( F(\Delta), \partial K),
\]
and
\begin{equation}\label{diam-of-union}
    \diam( F(\Delta)\cup G(\Delta))   \le C  \, \dist( F(\Delta), \partial K).
\end{equation}
\end{lemma}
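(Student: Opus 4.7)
The proof will go by induction on $k = \dim \Delta$, together with a blow-up analysis at scale $r := \dist(F(\Delta), \partial K)$. The base case $k = 0$ is immediate: $\Delta$ is a vertex, $G(\Delta)$ is a single nearest point on $\partial K$ (regular by~\eqref{closest}), $\Lip_s$ is vacuously $0$, and $\diam(F(\Delta)\cup G(\Delta)) = r$. For the inductive step, the inductive hypothesis supplies $\Lip_s(G|\partial\Delta)\le C'r$ on each $(k-1)$-face of $\partial\Delta$, since the relevant distance to $\partial K$ equals $r$ up to a factor $1+\eps$. Pick $x_0\in F(\Delta)$ nearly realizing $r$, and set $\tilde K := (K - x_0)/r$. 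Property~\eqref{Qb} of Definition~\ref{Q-definition} supplies, for $\eps$ small, a smooth convex $K'\subset\RR^{n+1}$ whose boundary lies within distance $1$ of the origin and to which $\tilde K$ is $C^\infty$-close on fixed-size balls; by~\eqref{Qc}, $\pi_j(\partial K')=0$ for all $j\le k-1$. In rescaled coordinates $\tilde F(\Delta)$ has diameter $<\eps$, and $\Lip_s(\tilde G|\partial\Delta)\le C'$.

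The next step is to construct an explicit continuous extension of $\tilde G|\partial\Delta$ to $\Delta$ inside $\partial\tilde K$ with bounded $\Lip_s$. Let $p\in\partial K'$ be the nearest point to the origin. In a fixed ball around $p$, both $\partial K'$ and (for $\eps$ small) $\partial\tilde K$ admit a graph representation over the tangent plane $T_p\partial K'$. I would transport $\tilde G|\partial\Delta$ into that graph chart, extend it across $\Delta$ by barycentric interpolation (permitted, globally if the image is not already small, by the triviality of $\pi_{k-1}(\partial K')$), and transplant the result back to $\partial\tilde K$. The resulting competitor has $\Lip_s$ bounded by a constant $C_1$ depending only on $k$ and the bounded-scale $C^\infty$ geometry of $\partial K'$. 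The space of continuous extensions to $\partial\tilde K$ is therefore nonempty with uniformly bounded $\Lip_s$; since $\partial K$ is compact, Arzelà--Ascoli then yields a $\Lip_s$-minimizing extension $\tilde G$ with $\Lip_s(\tilde G)\le C_1$. Scaling back yields $\Lip_s(G)\le C_1 r$, and the diameter bound~\eqref{diam-of-union} follows because $\diam(G(\Delta))\le\Lip_s(G)\le C_1 r$, $\diam(F(\Delta))<\eps r$, and $G(\Delta)\subset\partial K$ lies within distance $r(1+\eps)$ of every point of $F(\Delta)$.

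The principal obstacle is ensuring the constants $C'$ and $C_1$ are uniform as $\eps\to 0$ and as the blow-up limit $K'$ ranges over all admissible shapes. I would handle this by contradiction. Suppose the lemma fails: take sequences $\eps_i\to 0$ and maps $F_i, G_i$ witnessing the failure with $r_i\to 0$. After subsequential extraction, $\tilde K_i\to K'_\infty$ smoothly on bounded sets, the image $\tilde F_i(\Delta)$ collapses to a single point $q$ (since $\diam\tilde F_i(\Delta)<\eps_i\to 0$), and the $F_i$-optimal boundary data $\tilde G_i|\partial\Delta$ converges to the constant map at the nearest point $p\in\partial K'_\infty$: vertex images converge to the unique nearest point by~\eqref{closest}, and $\Lip_s$-minimizers with vertex data constant on a simplex are themselves constant, so the induction through the skeleton of $\partial\Delta$ forces the limit to be constant. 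Extending a nearly-constant map into the smooth hypersurface $\partial K'_\infty$ is trivial and produces competitors whose rescaled $\Lip_s$ tends to $0$, contradicting the assumed quantitative failure.
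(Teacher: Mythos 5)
Your overall strategy matches the paper's: the paper restates the lemma in exactly your contradiction form (Lemma~\ref{lemma-restated}), inducts on $\dim\Delta$, rescales by $1/\dist(F_i(\Delta),\partial K)$, and passes to a smooth convex blow-up limit $K'$. However, your argument has a genuine gap at the key step. You assert that the vertex images $\tilde G_i(v)$ converge to ``the unique nearest point $p\in\partial K'_\infty$ by~\eqref{closest},'' and hence that the limiting boundary data is the constant map. But~\eqref{closest} gives only \emph{regularity} of the nearest point, not uniqueness, and for the blow-ups that actually arise here the nearest point is typically \emph{not} unique: by Proposition~\ref{MainBlowUpProposition} the limit $K'$ can be a solid cylinder $C\times\RR^{n-j}$ (with $C\subset\RR^{j+1}$ compact convex), and if the origin lies on the $\RR^{n-j}$-axis the set of nearest points in $\partial K'$ is an entire $j$-sphere. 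The inductive hypothesis gives $\Lip_s(\tilde\Gamma_i)\lesssim 1$ after rescaling, which bounds the diameter of $\tilde\Gamma_i(\partial\Delta)$ but does not make it small. So the uniform limit $\Gamma'\colon\partial\Delta\to\partial K'$ is a Lipschitz map whose image has diameter of order one; it is neither constant nor nearly constant, it need not lie in any single graph chart about $p$, and your competitor extensions do not have $\Lip_s\to 0$.

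The missing idea is precisely the one the hypothesis $k\le Q(K)$ is there to supply: since $\pi_{k-1}(\partial K')=0$ by property~\eqref{Qc} of Definition~\ref{Q-definition}, the (possibly non-constant) Lipschitz map $\Gamma'\colon\partial\Delta\cong\Ss^{k-1}\to\partial K'$ is nullhomotopic and hence extends to a Lipschitz map $G'\colon\Delta\to\partial K'$; the smooth convergence $\partial K_i'\to\partial K'$ together with the bounded-Lipschitz convergence $\tilde\Gamma_i\to\Gamma'$ then produces competitors $G_i'$ with $\Lip(G_i')\le\Lip(G')+\delta_i$, which is what bounds the rescaled quantities. You do invoke $\pi_{k-1}(\partial K')=0$ parenthetically in the graph-chart paragraph, but your barycentric-interpolation construction requires the boundary data to fit in one chart, which fails exactly in the cylindrical case, and your contradiction paragraph then drops the topological input entirely in favor of the false constancy claim. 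Replacing ``the limit is constant'' by ``the limit is nullhomotopic because $\pi_{k-1}(\partial K')=0$, hence extends Lipschitzly'' repairs the argument and recovers the paper's proof, but as written the step is incorrect.
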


We may assume that the simplex $\Delta$ is standard since the statement of the theorem is not affected
by affine reparametrizations of the domain.   For purposes of proof, it is convenient to restate the lemma
as follows:

\begin{lemma}\label{lemma-restated}
Let $K$ be as in Lemma~\ref{lemma}, and let $\Delta$ be 
a standard simplex of dimension $k \le  Q(K)$.
Let $\eps_i\to 0$, and suppose that $F_i:\Delta\to K$ is a sequence of maps such that
\begin{equation}\label{diameter-tiny}
    \diam(F_i(\Delta)) \le \eps_i \dist( F_i(\Delta), \partial K)
\end{equation}
and such that
\begin{equation}\label{distance-tiny}
  \dist( F_i(\Delta), \partial K) < \eps_i.
\end{equation}
Suppose also that $\Gamma_i: \partial \Delta\to \partial K$ is a sequence of $F_i$-optimal
maps.  Then for all sufficiently large $i$, there exists an $F_i$-optimal map $G_i: \Delta\to \partial K$
that extends $\Gamma_i$, 
and such a $G_i$ must (for all sufficiently large $i$) have the following properties:
\begin{enumerate}[\upshape (i)]
\item\label{G-in-regular-part} $G_i(\Delta)$ is contained in the regular part of $\partial K$.
\item\label{ratios-bounded} The quantities
\[
   \frac{\Lip G_i}{\dist(F_i(\Delta), \partial K)}
\]
(if $k>0$) and
\[ 
    \frac{\diam(F_i(\Delta) \cup G_i(\Delta))}{ \dist( F_i(\Delta), \partial K) }
\]
are bounded above as $i\to\infty$.
\end{enumerate}
\end{lemma}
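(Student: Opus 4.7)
The proof is by induction on $k = \dim\Delta$, with a blowup argument via property~\eqref{Qb} of Definition~\ref{Q-definition} at each step. For the base case $k=0$, $\Delta = \{v\}$ has empty boundary, so no extension is required: we take $G_i(v)$ to be a point of $\partial K$ realizing $\dist(F_i(v),\partial K)$, which exists by compactness, is a regular point of $\partial K$ by \eqref{closest}, and satisfies the conclusions trivially.

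For the inductive step, set $d_i := \dist(F_i(\Delta),\partial K)$, pick $p_i \in F_i(\Delta)$ and $y_i \in \partial K$ with $\dist(p_i,y_i) = d_i$, and rescale by translating by $-p_i$ and dilating by $1/d_i$ (in normal coordinates about $p_i$) to obtain sets $\tilde K_i$ and maps $\tilde F_i, \tilde \Gamma_i$. Property~\eqref{Qb} applied to the interior sequence $p_i$ produces, along a subsequence, a convex limit $K' \subset \RR^{n+1}$ with smooth boundary and smooth convergence $\partial\tilde K_i \to \partial K'$ on bounded sets; hypotheses~\eqref{diameter-tiny} and \eqref{distance-tiny} force $\tilde F_i(\Delta)$ to collapse to the origin, which lies at rescaled distance $1$ from $\partial K'$. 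For each $(k-1)$-face $\Delta' \subset \partial\Delta$ one has $d_i \le \dist(F_i(\Delta'),\partial K) \le (1+\eps_i)d_i$ and $\diam(F_i(\Delta')) \le \eps_i d_i$, so the inductive hypothesis applies to $(F_i|_{\Delta'}, \Gamma_i|_{\Delta'})$ with a vanishing sequence $\eps_i' \to 0$, yielding uniform Lipschitz and diameter bounds for $\tilde\Gamma_i$ on each face. Arzelà--Ascoli combined with smooth convergence of the boundaries produces, along a further subsequence, a Lipschitz limit $\Gamma_\infty : \partial\Delta \to \partial K'$. Because $k \le Q(K) = m$, property~\eqref{Qc} forces $\pi_{k-1}(\partial K') = 0$, so $\Gamma_\infty$ admits a Lipschitz extension $G_\infty : \Delta \to \partial K'$.

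To pass from $G_\infty$ back to the unrescaled picture, observe that the smooth convergence $\partial\tilde K_i \to \partial K'$ on a bounded neighborhood of $G_\infty(\Delta)$ gives, for large $i$, a diffeomorphism $\Phi_i$ from that neighborhood onto the corresponding region of $\partial\tilde K_i$ that is $C^\infty$-close to the inclusion. The composition $\Phi_i \circ G_\infty$ is a Lipschitz extension of $\Phi_i \circ \Gamma_\infty$, and its boundary values differ from $\tilde\Gamma_i$ by $o(1)$ in $C^0$. A collar interpolation on a thin neighborhood of $\partial\Delta$, using the smoothness of $\partial\tilde K_i$ in the bounded region, modifies $\Phi_i \circ G_\infty$ into a Lipschitz extension $\hat g_i : \Delta \to \partial\tilde K_i$ of $\tilde\Gamma_i$ with $\Lip(\hat g_i)$ bounded independent of $i$. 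Existence of the $F_i$-optimal (i.e.\ $\Lip_s$-minimizing) extension $G_i$ then follows from Arzelà--Ascoli and the lower semicontinuity of $\Lip$, and as a minimizer $\Lip(\tilde G_i) \le \Lip(\hat g_i) = O(1)$ in the rescaled picture. Both ratio bounds in \eqref{ratios-bounded} follow upon scaling back, and since $\tilde G_i(\Delta)$ is bounded and $\partial\tilde K_i$ converges smoothly to the smooth set $\partial K'$ there, every point of $\tilde G_i(\Delta)$ is a regular point of $\partial K$ for large $i$, giving \eqref{G-in-regular-part}.

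The principal obstacle is the construction of $\hat g_i$: one must combine the null-homotopy from \eqref{Qc} with the smooth convergence from \eqref{Qb} to transport the abstract limit extension $G_\infty$ onto the actual boundary $\partial\tilde K_i$, and then bridge the small $C^0$ discrepancy between the transported boundary data and the prescribed $\tilde\Gamma_i$ by a collar interpolation with uniformly controlled Lipschitz constant. Once $\hat g_i$ is in hand, existence of the $\Lip_s$-minimizer, the scaling bounds, and the regularity of $G_i(\Delta)$ all fall out of standard compactness and smooth-convergence arguments. A minor but important bookkeeping point is verifying that the rescaled data on the $(k-1)$-faces satisfies the inductive hypothesis with a new vanishing sequence, so that the inherited Lipschitz and diameter bounds on $\partial\Delta$ are actually uniform in $i$.
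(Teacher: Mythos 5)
Your proposal is correct and follows essentially the same route as the paper: induction on $\dim\Delta$; blowup at the nearest point via property~\eqref{Qb} to obtain a smooth convex limit $K'$; Arzel\`a--Ascoli on the rescaled boundary data using the inductively-obtained Lipschitz and diameter bounds; triviality of $\pi_{k-1}(\partial K')$ from~\eqref{Qc} to extend the limit map; and transfer of that extension back to $\partial K_i'$ to dominate the $\Lip_s$-minimizing extension. The only difference is that you spell out the transfer step (the diffeomorphism $\Phi_i$ from smooth convergence plus a collar interpolation to match boundary data), which the paper asserts in a single sentence without detail.
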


\begin{proof}
We prove it by induction on the dimension of $\Delta$.

If $\Delta$ is $0$-dimensional, it is a single point $p$.
Let $G_i(p)$ be a point in the interior of $\partial K$ such that 
\[
    \dist(F_i(p), G_i(p)) = \dist(F_i(p), \partial K).
\]
Since $Q(K)>-\infty$, this implies that $G_i(p)$ is a regular point (see~\eqref{closest}), 
so~\eqref{G-in-regular-part}
 holds. The two ratios in~\eqref{ratios-bounded} are trivially equal to $0$ and $1$, 
 so~\eqref{ratios-bounded} also holds. This completes the proof of the lemma when
 $\Delta$ is $0$-dimensional.

Now suppose that $1\le k=\dim(\Delta)\le Q(K)$.
By induction, we may assume that the lemma is true for each face of $\Delta$.
Let $p_i$ be a point in $F_i(\Delta)$ that minimizes the distance from $F_i(p_i)$ to $\partial K$.
  
Translate $K$  by $-F(p_i)$ and dilate by
\[
     \lambda_i = \frac1{\dist(F_i(p), \partial K)}
\]
 to get a set $K_i'$.
Let $F_i':S\to K_i'$ and $\Gamma_i': \partial \Delta\to \partial K_i'$ be the maps
corresponding to $F_i$ and $\Gamma_i$.  Note that
\begin{equation}\label{0-in-image}
    0\in F_i'(\Delta) \subset K_i'
\end{equation}
and that
\begin{equation}\label{NormalizedDistToBoundary}
   1 = \dist( 0, \partial K') = \dist(F_i'(\Delta), \partial K_i').
\end{equation}

By passing to a subsequence, we may assume that the $K_i'$ converge smoothly
to a convex set $K'$ with
\begin{equation}\label{OriginInK'}
    \text{$0\in K'$ and $\dist(0, \partial K') = 1$}. 
\end{equation}

By~\eqref{diameter-tiny} and~\eqref{NormalizedDistToBoundary},
\[
  \diam (F_i'(\Delta)) \le \eps_i \, \dist( F_i'(\Delta), \partial K'_i)  = \eps_i \to 0,
\]
so by~\eqref{0-in-image},
\begin{equation}\label{converge-to-zero}
   \text{$F_i'(\cdot)\to 0$ uniformly.}
\end{equation}
Thus by~\eqref{OriginInK'},
\begin{equation}\label{separation-one}
   \dist(F_i'(\cdot), \partial K_i') \to 1 \quad \text{uniformly.}
\end{equation}

By induction we can assume that~\eqref{ratios-bounded}
 holds for the restrictions of $F_i$  and $\Gamma_i$ to each face $\Delta^*$ of $\Delta$.
Thus
\[   
      \Lip (\Gamma_i \vert \Delta^*) \le c \dist( F_i(\Delta^*), \partial K)
\]
for some constant $c$, which implies by rescaling that
\[
      \Lip (\Gamma'_i \vert \Delta^*) \le c \dist( F'_i(\Delta^*), \partial K_i').
\]
By~\eqref{converge-to-zero} and~\eqref{separation-one}, the right hand side
tends to $c$, so
\begin{equation}\label{GammasAreLipschitz}
   \limsup_i \left( \Lip(\Gamma'_i \vert \Delta^*) \right) \le c.
\end{equation}

If $v$ is a vertex of $\Delta$, then $\Gamma_i'(v)$ is a point in $\partial K_i'$ closest to $F_i'(v)$.
Since  since $F_i'(\cdot)\to 0$ and since $K_i'\to K'$ smoothly, this implies that
\begin{equation}\label{G'bounded}
  \limsup_{i\to\infty} \, \dist(\Gamma_i'(v), 0) = \dist(\partial K', 0)  = 1.
\end{equation}

By~\eqref{GammasAreLipschitz} and~\eqref{G'bounded}, 
the $\Gamma_i'$ form an equicontinuous family, so after passing to a subsequence, we can
assume that the $\Gamma_i'$ converge uniformly to a Lipschitz map
\[
  \Gamma': \partial \Delta \to \partial K'.
\]
Now $\partial K'$ is smooth.
Also, $k=\dim(\Delta) \le Q(K)$, so by definition of $Q(K)$,  
the $(k-1)$-dimensional homotopy of $\partial K'$ is trivial.
Thus the map $\Gamma'$
extends to a Lipschitz map $G': \Delta \to \partial K'$.  

By  the smooth convergence $K_i'\to K'$ and by the bounded Lipschitz norm convergence
$\Gamma_i' \to \Gamma'$, it follows that (for all sufficiently large $i$) there is a Lipschitz map
\[
      G_i' : \Delta \to \partial K_i'
\]
such that $G_i'$ extends $\Gamma_i'$ and such that
\begin{equation}\label{delta-i}
    \Lip(G_i') \le \Lip(G') + \delta_i
\end{equation}
where $\delta_i\to 0$.   We may assume that $G_i': \partial \Delta \to K_i'$ is the extension
of smallest Lipschitz norm. (This minimizing extension exists because $\partial K_i'$ is compact.)
By passing to a subsequence, the $G_i'$ converge uniformly to a limit map, which
we may assume to be $G'$.  (Otherwise redefine $G'$ to be that limit map.)

In particular, the smooth convergence $\partial K_i' \to \partial K'$ implies that $G_i'$
maps $\Delta$ to the regular part of $\partial K'_i$ (if $i$ is sufficiently large).

Note that 
\[
   \frac{\Lip(G_i)}{\dist(F_i(\Delta),\partial K)}
   =
   \frac{\Lip(G'_i)}{\dist(F_i'(\Delta),\partial K_i')}
   =
   \frac{\Lip(G'_i)}{1} 
\]
which is bounded as $i\to\infty$ by~\eqref{delta-i}.

Similarly we have
\begin{equation}\label{DiameterEstimate}
   \frac{\diam(F_i(\Delta)\cup G_i(\Delta))}{\dist(F_i(\Delta), \partial K)}
   =
   \frac{\diam(F'_i(\Delta)\cup G'_i(\Delta))}{\dist(F'_i(\Delta), \partial K_i')}
   =
   \frac{\diam(F'_i(\Delta)\cup G'_i(\Delta)}1
\end{equation}
which converges to $\diam(\{0\}\cup G'(\Delta))$ as $i\to\infty$ (since $F'_i\to 0$ and $G'_i\to G'$ uniformly.)
In particular, \eqref{DiameterEstimate} is bounded as $i\to\infty$.
\end{proof}

\section{Manifolds with Boundary}\label{Boundaries}

So far in this paper, the moving hypersurfaces $\partial K(t)$ under consideration
have been hypersurfaces without boundary.   Now we consider the case of hypersurfaces
with boundary, the motion of the boundary being prescribed and the motion away
from the boundary being by mean curvature flow (or possibly by other analogous flows.)

\begin{definition}
Let $N$ be a smooth $(n+1)$-dimensional manifold-with-boundary.
Let $K$ be a closed subset of $N$.
A point $p\in K$ is called a {\bf regular point} of $K$ provided
\begin{enumerate}
\item $p$ is an interior point of $K$, or
\item $p\in N\setminus \partial N$ and $N$ has a neighborhood $U$
of $p$ such that $K\cap U$ is diffeomorphic to a closed half-space in $\RR^{n+1}$, or
\item $p\in \partial N$ and $N$ has a neighborhood $U$ of $p$ for which there
is a diffeomorphism that maps $U$ onto $\{x\in \RR^{n+1}: x_1\ge 0\}$ and 
that maps $K\cap U$ onto $\{x\in \RR^{n+1}: x_1\ge 0, x_2\ge 0\}$.
\end{enumerate}
\end{definition}
Points in $K$ that are not regular points are called singular points of $K$.

The following theorem should be thought of as a theorem about a moving hypersurface-with-boundary.
At time $t$, the hypersurface is 
\[
       M(t)=\partial K(t) = K(t)\cap \overline{N\setminus K(t)},
\]
and its
boundary is $\Gamma(t):=M(t)\cap \partial N$.  In practice, the initial surface would be prescribed by prescribing
$K(0)$, and the motion of the boundary would be prescribed by prescribing  $\Gamma(t)$ or, equivalently, by prescribing $K(t)\cap N$. The geometric flow
would then determine the moving region $K(t)$ or, equivalently, the moving hypersurface $M(t)$.

We first give an abstract version of the main theorem of this section
  (afterwards, in Theorem~\ref{main-boundary}, we specialize
to mean curvature flow):

\begin{theorem}\label{main-abstract-boundary}
Let $t\in [a,b]\mapsto K(t)$ be a one-parameter family of compact subsets of a smooth, 
$(n+1)$-dimensional Riemannian manifold-with-boundary $N$, and let 
\[
  M(t)=\partial K(t)=K(t)\cap \overline{N\setminus K(t)}.
\]
Assume that there is a collared neighborhood $U\subset N$ of $\partial N$
such that each $M(t)\cap U$ is a smooth, 
embedded manifold-with-boundary, the boundary being $M(t)\cap \partial N$, and 
that $M(t)\cap U$ depends smoothly on $t$ for $t\in [a,b]$.
Assume that $M(t)\cap U$ is never tangent to $\partial N$.
Assume also that
\begin{enumerate}[\upshape (1)]
\item\label{nested} $K(t)\subset K(T)$ for $T\le t$.
\item\label{union} $K(a)\cap K(b)^c = \cup_{a\le t < b} M(t)$.
\item\label{disjoint} $M(t)\cap M(T)\subset \partial N$ for $t\ne T$.
\end{enumerate}
If 
\[
 \text{$Q(K(t)) \ge m$ for all $t\in [a,b)$,}
\]
then the pair $(K(b)^c, K(a)^c)$ is $m$-connected.
\end{theorem}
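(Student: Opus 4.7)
The plan is to imitate the four-step scheme of Theorem~\ref{main-abstract} almost verbatim, adding one new ingredient to push off the portion of $M(T)$ inside the collar $U$ where the evolution is smooth and prescribed. Let $\FF$ be the set of maps $(\BB^k,\partial\BB^k)\to(K(b)^c,K(a)^c)$ homotopic (as such) to $F_0$, and let $J\subset[a,b]$ be the set of times $t$ for which some $F\in\FF$ has image disjoint from $K(t)$. Hypotheses~\eqref{nested}--\eqref{disjoint} again produce a continuous time function $\tau:K(a)\cap K(b)^c\to[a,b)$ with $M(t)=\tau^{-1}(t)$, from which $b\in J$ and $J$ relatively open in $[a,b]$ follow exactly as before. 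Only closedness of $J$ requires new work.

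Fix $T\in[a,b)$ in the closure of $J$ and choose $F\in\FF$ with $F(\BB^k)\cap K(T^*)=\emptyset$ for some $T^*>T$ close to $T$. The surface $M(T)$ splits into a free portion inside $U$, where $\{M(t)\}$ is a smooth family transverse to $\partial N$, and an interior portion outside $U$, to which Lemma~\ref{lemma} applies. For the free portion, choose a slightly smaller collar $U'\Subset U$; transversality of $M(T)$ to $\partial N$ together with smooth $t$-dependence provides a smooth diffeomorphism between a neighborhood of $M(T)\cap\overline{U'}$ and $(M(T)\cap\overline{U'})\times(T-\delta,T+\delta)$ sending $M(t)$ to the $\{t\}$-slice. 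Pulling back a cut-off multiple of $-\partial/\partial t$ gives a compactly supported smooth vector field $X$ on $U$ whose time-$\eta$ flow, for small $\eta>0$, carries $K(T)\cap\overline{U'}$ into $K(T-\eta')^c$ for some $\eta'>0$ and is the identity outside a small neighborhood of $M(T)\cap\overline{U'}$.

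For the interior portion, put $F$ in general position so that $F(\BB^k)$ avoids $\sing(K(T))\setminus\overline{U'}$; this is possible because $\dim\sing(K(T))\le n-Q(K(T))\le n-m$ and $k\le m$, so the dimension count in the appendix still applies. Triangulate $\Omega:=F^{-1}(\interior K(T))\setminus F^{-1}(\overline{U'})$ finely enough that each simplex satisfies the smallness hypothesis of Lemma~\ref{lemma}, and build the $F$-optimal map $G$ as in the proof of Theorem~\ref{main-abstract}, extending by $F$ off $\Omega$; then $G$ lands in $K(T)^c$ together with the regular part of $\partial K(T)\setminus\overline{U'}$. Follow $G$ by the outward-geodesic homotopy of that proof to push off the latter, then concatenate with the time-$\eta$ flow of $X$ to remove the remaining portion in $\overline{U'}$. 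Choosing all parameters small enough that the composite homotopy stays in $K(T^*)^c\subset K(b)^c$ yields a map in $\FF$ disjoint from $K(T)$, so $T\in J$ and the proof is complete.

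The principal obstacle is the free-boundary step together with its compatibility with the interior construction. One must check that the foliation argument yields a vector field $X$ whose flow moves \emph{all} of $K(T)\cap\overline{U'}$---not merely the hypersurface $M(T)\cap\overline{U'}$---off $K(T)$; this uses both the transversality of $M(T)$ to $\partial N$ and the fact that in the smooth collar $K(T)\cap\overline{U'}=\{\tau\ge T\}\cap\overline{U'}$ is a superlevel set of a smooth submersion, so the product structure extends to a full neighborhood of $M(T)\cap\overline{U'}$ in $K(T)$. Gluing to the interior $F$-optimal construction then works because $\Omega$ is disjoint from $\overline{U'}$, so the two modifications act on disjoint parts of $\BB^k$ and may be concatenated without interference. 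Once this step is in hand, the remainder of the argument is a direct translation of Theorem~\ref{main-abstract}.
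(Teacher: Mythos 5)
Your proposal takes a genuinely different route from the paper, and unfortunately it has a gap where the two constructions are glued.

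The paper's own proof is a short reduction: choose $\delta>0$ small enough that $N'':=\{x\in N:\dist(x,\partial N)<\delta\}$ is contained in the collar $U$, replace $N$ by $N':=N\setminus N''$ and each $K(t)$ by $K(t)\cap N'$; since hypothesis~(3) confines all intersections $M(t)\cap M(T)$ to $\partial N$, the truncated surfaces are now disjoint and partition $K(a)\cap K(b)^c\cap N'$, so the proof of Theorem~\ref{main-abstract} runs verbatim. The only extra remark needed is that any $F:(\BB^k,\partial\BB^k)\to(K(b)^c,K(a)^c)$ can first be homotoped off $N''$ (and in particular off $\partial N$) by flowing briefly along an inward-pointing vector field, so that working in $N'$ loses nothing topologically. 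Your approach instead tries to split the \emph{map} $F$ rather than the \emph{ambient manifold}, and this is where the trouble is.

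The concrete gap is in the continuity of $G$. You set $\Omega:=F^{-1}(\interior K(T))\setminus F^{-1}(\overline{U'})$, build the $F$-optimal map on $\Omega$, and declare $G=F$ off $\Omega$. In Theorem~\ref{main-abstract} this extension is continuous because on $\partial\Omega$ one has $F(x)\in\partial K(T)$, hence $\dist(F(x),\partial K(T))=0$ and the diameter bound~\eqref{diam-of-union} forces $G(x)\to F(x)$. In your version, $\partial\Omega$ has a second piece, namely $F^{-1}\bigl(\partial\overline{U'}\cap\interior K(T)\bigr)$. There $\dist(F(x),\partial K(T))$ is bounded away from $0$, so the $F$-optimal map jumps from a point of $\partial K(T)$ to the interior point $F(x)$; $G$ is discontinuous across that piece of $\partial\Omega$, and the subsequent homotopies cannot repair a discontinuity. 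The claim that the two modifications ``act on disjoint parts of $\BB^k$'' is false in the relevant sense: the vector-field flow is an ambient isotopy, while the $F$-optimal construction is a reparametrization of the map, and their domains of effect meet precisely along the bad part of $\partial\Omega$. A secondary imprecision: hypotheses~(1)--(3) do \emph{not} give a single-valued continuous $\tau$ on all of $K(a)\cap K(b)^c$, since hypothesis~(3) permits $M(t)\cap M(T)\ne\emptyset$ on $\partial N$ (e.g.\ a fixed boundary $\Gamma(t)\equiv\Gamma$), so $\tau$ is multivalued there; the openness of $J$ can still be shown, but not by quoting Remark~\ref{time-function} verbatim.

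Both problems disappear if you first push $F$ off a collar of $\partial N$ and then argue in $N'=\{\dist(\cdot,\partial N)\ge\delta\}$, which is exactly the paper's reduction. If you insist on your decomposition, you would need to arrange the cut so that on the new boundary piece of $\partial\Omega$ one still has $\dist(F(x),\partial K(T))\to 0$ --- for instance by first applying the collar isotopy to $F$ so that the resulting map meets $K(T)\cap\overline{U'}$ only near $M(T)$, and only then forming $\Omega$; but that is essentially the paper's reduction in disguise.
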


Note that hypothesis~\eqref{disjoint} allows the boundary of $M(t)$ to be fixed or to move.
Note also that the hypotheses imply that the singularities of $K(t)$ lie in the interior of $N$.

If the $M(t)$'s are disjoint, Theorem~\ref{main-abstract-boundary} can be proved
exactly as 
Theorem~\ref{main-abstract} was proved.
In the general case, we can reduce to the case of disjoint $M(t)$'s by replacing $N$ by 
\[
   N':=N\cap \{x : \dist(x,\partial N)\ge \delta\}
\]
for some sufficiently small $\delta >0$, 
and by replacing each $K(t)$ by $K(t)\cap N'$.

(For the proof, it is useful to note that if $F:(\BB^k, \partial \BB^k)\to (K(t)^c, K(a)^c)$, 
then $F$ is homotopic in $(K(t)^c, K(a)^c)$ to a map $G$ whose image lies in the interior
of $N$.  To see this, let $\vv$ be any vectorfield on $N$ that is equal on $\partial N$ to the
unit normal pointing into $N$.  Now flow by that vectorfield for a short time to push $F$ into
the interior of $N$.)

In the case of mean curvature flow, we have the following theorem.
In the statement of the the theorem, the boundary 
   $\Gamma(t)$ of the moving surface $M(t):=\partial K(t)$
is given by giving a region $V(t)$ in $\partial N$ such that $\Gamma(t)=\partial V(t)$.

\begin{theorem}\label{main-boundary}
Let $N$ be a smooth, compact, connected $(n+1)$-dimensional Riemannian manifold with boundary
with $n<7$.
Let $t\in [0,\infty)\mapsto V(t)$ be a smooth, one-parameter family of compact, 
smooth, $n$-dimensional
manifolds with boundary in $\partial N$ such that $V(t')\subset V(t)$ for $t\ge t'$.
Let $K$ be a closed subset of $N$ such that $\partial K$ is a smooth, compact, connected
manifold-with-boundary such that $K\cap \partial N=V(0)$, and such that $\partial K$ is smooth
with mean curvature at each point a nonnegative multiple of the unit normal that points into $K$,
and such that $\partial K$ is nowhere tangent to $\partial N$.

If $\partial K$ is a minimal surface (i.e., has mean curvature $0$ at all points),
assume\footnote{This assumption guarantees that the surface starts moving immediately.} 
also that $V(t)\ne V(0)$ for $t\ne 0$.

Let $t\in [0,\infty)\mapsto M(t)$ be the solution obtained
by elliptic regularization of mean curvature flow such that $M(0)=\partial K$
and such that $\partial M(t)=\partial V(t)$ for all $t$.

Then each $M(t)$ is the boundary in $N$ of a region $K(t)\subset K$.
The singularities of the flow form a compact subset of the interior of $N$
and all have convex type.   

In particular, if the Gaussian densities of the singularities in the time interval $a\le t < b$
are all $\le d_m$,
then $t\mapsto K(t)$ satisfies all the hypotheses of Theorem~\ref{main-abstract-boundary},
and therefore the pair $(K(b)^c, K(a)^c)$ is $m$-connected.

If $n\ge 7$, the Theorem remains true provided the metric on $N$ is flat and provided
$M(t)$ is smooth for some $t\ge b$.
\end{theorem}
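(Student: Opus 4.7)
The plan is to construct the flow $M(t)$ via elliptic regularization with the prescribed moving boundary, verify that the resulting surfaces bound a nested family $K(t)\subset K$ satisfying the hypotheses of Theorem~\ref{main-abstract-boundary}, and then invoke the interior partial regularity machinery for convex-type singularities. Following Ilmanen's method adapted to boundary conditions, one solves a translator equation on $N\times\RR$ with prescribed Dirichlet data chosen so that the horizontal slices of the boundary of the graph reproduce the boundaries $\partial V(t)\subset\partial N$; slicing the limit (as the translation speed diverges) at height $t$ recovers both a set-theoretic solution $M(t)$ and an associated Brakke flow of varifolds.

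Once $M(t)$ is constructed, mean convexity of $\partial K$ with respect to the inward normal together with the nestedness of $V(\cdot)$ confines $M(t)\subset K$ via the avoidance principle, so each $M(t)$ separates a region $K(t)\subset K$. Monotonicity of the approximating translator graphs simultaneously supplies the nestedness $K(t)\subset K(T)$ for $T\le t$ and a continuous time function $\tau:K\setminus\interior(K(b))\to[a,b)$ realizing hypothesis~\eqref{union} of Theorem~\ref{main-abstract-boundary}. Hypothesis~\eqref{disjoint}, that $M(t)\cap M(T)\subset\partial N$ for $t\ne T$, follows from the strong maximum principle applied in the interior of $N$: two interior-intersecting time-slices would have to coincide on an open set, contradicting the fact that the flow is strictly advancing inward (which is exactly where the auxiliary hypothesis $V(t)\ne V(0)$ for $t\ne 0$ in the minimal case is used, ensuring the flow starts moving immediately).

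Next I would establish that the singular set sits in a compact subset of the interior of $N$ by exhibiting explicit barriers near $\partial N$: the smooth transversality of $\partial K$ to $\partial N$ combined with the smoothness of $V(\cdot)$ provides local foliations of a collared neighborhood $U$ by approximating translators with uniform estimates up to $\partial N$, forcing $M(t)\cap U$ to be a smoothly-varying family of smooth manifolds-with-boundary. With singularities confined to the interior, the arguments of \cite{White-Size} and~\cite{White-Nature} carry over (as noted in~\S\ref{abstract-form-section}), giving convex-type singularities for $n<7$, and \cite{White-Subsequent} handles the $n\ge 7$ flat case under the additional smoothness assumption on $M(t)$ for some $t\ge b$.

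With these ingredients assembled, Proposition~\ref{Q-density-relation}, whose proof is purely interior, translates the bound "Gaussian densities $\le d_m$ on $[a,b)$" into $Q(K(t))\ge m$ on $[a,b)$. All hypotheses of Theorem~\ref{main-abstract-boundary} are in place, yielding $m$-connectedness of $(K(b)^c,K(a)^c)$. I expect the main obstacle to be the boundary regularity step: arranging barriers robust enough to handle the possibly degenerate case when $\partial K$ is minimal, where the interior flow has zero initial velocity and the evolution is driven entirely by the prescribed motion of $\partial V(t)$, is the most delicate technical ingredient and is what forces the auxiliary hypothesis in the minimal-initial-surface case.
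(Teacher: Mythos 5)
Your proposal takes essentially the same approach as the paper. The paper's own proof is extremely short: it simply cites \cite{White-Subsequent} for everything except the $m$-connectivity claim (i.e., for the construction of $K(t)$ via elliptic regularization, the containment $K(t)\subset K$, the confinement of the singular set to a compact subset of the interior of $N$, and the convex-type property of the singularities), and then deduces $m$-connectivity of $(K(b)^c,K(a)^c)$ from Theorem~\ref{main-abstract-boundary} together with Proposition~\ref{Q-density-relation}. Your sketch fills in a plausible outline of what \cite{White-Subsequent} must establish --- the translator-graph construction, avoidance, barriers near $\partial N$ forcing interior singularities, strong maximum principle giving hypothesis~\eqref{disjoint} --- and then performs the identical final deduction. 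This matches the paper's logic; the only difference is that the paper defers the technical flow-construction content to a reference while you sketch it, and the paper attributes the whole package to \cite{White-Subsequent} rather than distributing it across \cite{White-Size}, \cite{White-Nature}, and \cite{White-Subsequent} as you do (a minor bookkeeping difference, not a logical one).
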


The theorem should be true for all $n$ without the somewhat peculiar assumptions
in the last sentence of the theorem.  Those assumptions are needed only because
without them we do not know how to prove that the singularities of the flow
have convex type.

\begin{proof}
Except for the assertion that the pair $(K(b)^c, K(a)^c)$ is $m$-connected, 
this is proved in~\cite{White-Subsequent}.  The $m$-connectivity of $(K(b)^c, K(a)^c)$
then follows by Theorem~\ref{main-abstract-boundary} and Proposition~\ref{Q-density-relation}.
\end{proof}

\section{The topology of the moving regions}\label{duality-section}

The theorems described so far are about the topology of the exteriors of the moving regions $K(t)$.
Using standard duality theorems of topology, we can draw conclusions about the changing topology
of the regions themselves. 

\begin{proposition}\label{top-prop}
Let $X$ be a compact orientable $(n+1)$-dimensional manifold-with-boundary.
Suppose 
\[
   \partial X = A \cup B
\]
where $A$ and $B$ are compact $n$-manifolds-with-boundary, and
that $A\cap B = \partial A= \partial B$.
If $(X,A)$ is $m$-connected, then 
\begin{equation}\label{relative-homology-in-top-prop}
    \operatorname{H}_k(X,B)=0
\end{equation}
for $k>n-m$. 
\end{proposition}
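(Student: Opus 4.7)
The plan is to deduce the vanishing of $H_k(X,B)$ from the $m$-connectedness of $(X,A)$ by means of Poincar\'e--Lefschetz duality for a manifold whose boundary is decomposed into two pieces, together with the universal coefficient theorem.

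First I would invoke Poincar\'e--Lefschetz duality in the following form: if $X$ is a compact oriented $(n+1)$-manifold with boundary, and if $\partial X$ is written as a union $A \cup B$ of two codimension-$0$ submanifolds-with-boundary meeting along their common boundary $\partial A = \partial B$, then capping with the fundamental class yields isomorphisms
\[
  H_k(X,B) \;\cong\; H^{n+1-k}(X,A).
\]
This is the standard duality for manifolds with ``corners,'' and the orientability hypothesis in the proposition is exactly what is needed to form the fundamental class $[X,\partial X] \in H_{n+1}(X,\partial X)$.

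Second, I would translate the $m$-connectedness hypothesis on $(X,A)$ into a statement about relative integral homology. Since $(X,A)$ is $m$-connected (every map $(\BB^k,\partial\BB^k)\to(X,A)$ with $k\le m$ is homotopic as a pair to a map into $A$), the relative homotopy groups $\pi_k(X,A)$ vanish for $k\le m$, and the Hurewicz theorem (in its relative form, applied inductively since all lower relative homotopy groups vanish) gives
\[
  H_j(X,A) \;=\; 0 \qquad \text{for } 0\le j \le m.
\]

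Third, I would feed these vanishing statements into the universal coefficient theorem for cohomology:
\[
  H^{n+1-k}(X,A) \;\cong\; \operatorname{Hom}\bigl(H_{n+1-k}(X,A),\ZZ\bigr) \;\oplus\; \operatorname{Ext}\bigl(H_{n-k}(X,A),\ZZ\bigr).
\]
For $k > n-m$ we have $n+1-k \le m$ and $n-k \le m-1$, so both of the integral homology groups appearing on the right vanish by the previous step (when the index is negative the group is zero by convention). Hence $H^{n+1-k}(X,A)=0$, and combining with duality gives $H_k(X,B)=0$, as desired.

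The main technical obstacle is the correct invocation of Poincar\'e--Lefschetz duality in the setting where $\partial X$ is presented as two pieces glued along $\partial A = \partial B$; one must check that this is genuinely the situation of a manifold with corners (or, after smoothing corners, an ordinary manifold-with-boundary $X$ together with a codimension-zero submanifold $A \subset \partial X$), so that the standard duality isomorphism $H_k(X,B)\cong H^{n+1-k}(X,A)$ applies. The Hurewicz and universal-coefficient steps, by contrast, are routine.
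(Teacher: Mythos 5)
Your proof is correct and follows essentially the same route as the paper: Poincar\'e--Lefschetz duality $H_k(X,B)\cong H^{n+1-k}(X,A)$ for the split boundary, vanishing of $H_j(X,A)$ for $j\le m$ from $m$-connectedness, and the universal coefficient theorem to kill the dual cohomology group. The only point worth noting is that the literal relative Hurewicz theorem carries a simple-connectivity hypothesis on $A$; the vanishing $H_j(X,A)=0$ for $j\le m$ is more safely obtained from the compression lemma for CW pairs (or cellular approximation), though the paper itself states this implication without comment.
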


\begin{proof}
Let $p\le m$.
Since $(X,A)$ is $m$-connected, 
\[
 \operatorname{H}_p(X,A) = \operatorname{H}_{p-1}(X,A) = 0.
\]
It follows that $\operatorname{H}^p(X,A)=0$.  (This is easy to prove directly, but it is also a special case of the Universal
Coefficients Theorem \cite{Hatcher}*{Theorem~3.2, p. 195}.)
By the Poincare-Lefschetz Duality Theorem~\cite{Hatcher}*{Theorem~3.43, p. 254},
\[
 \operatorname{H}^p(X,A) \cong \operatorname{H}_{n+1-p}(X,B).
\] 
Thus $\operatorname{H}_{n+1-p}(X,B)=0$.  This holds for every $p\le m$, so~\eqref{relative-homology-in-top-prop} 
holds for every $k>n-m$.
\end{proof}

\begin{theorem}\label{add-conclusion}
Suppose, in Theorems~\ref{main}, \ref{main-abstract}, \ref{main-abstract-boundary}, or~\ref{main-boundary}, 
that the surfaces $\partial K(a)$ and $\partial K(b)$ are smooth.
Then:
\begin{enumerate}[\upshape (1)]
\item\label{relative-homology} $\operatorname{H}_k(K(a), K(b))=0$ for all $k>n-m$.
\item\label{homology-inclusion} The map $\iota_\#: \operatorname{H}_k(K(b))\to \operatorname{H}_k(K(a))$ is an isomorphism for $k>n-m$ and is an injection for $k=n-m$.
\end{enumerate}
\end{theorem}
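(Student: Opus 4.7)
The plan is to apply Proposition~\ref{top-prop} to the compact shell $X=\overline{K(a)\setminus K(b)}$, then transport the resulting vanishing of relative homology to $(K(a),K(b))$ by excision, and finally extract~(2) from the long exact sequence of the pair.

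Set $X = K(a) \setminus \interior(K(b))$. In the boundaryless case (Theorems~\ref{main} and~\ref{main-abstract}) take $A=\partial K(a)$ and $B=\partial K(b)$; these are disjoint closed $n$-manifolds, so $\partial X=A\sqcup B$ and $A\cap B=\partial A=\partial B=\varnothing$, as required by Proposition~\ref{top-prop}. To verify that $(X,A)$ is $m$-connected, I observe that $\overline{K(b)^c}=X\cup\overline{K(a)^c}$ with $X\cap\overline{K(a)^c}=A$, and that collars of $A$ inside $\overline{K(a)^c}$ (available because $\partial K(a)$ is smooth) let the good-pair excision give a natural isomorphism of relative groups $(X,A)\cong(\overline{K(b)^c},\overline{K(a)^c})$, which is in turn equivalent to $(K(b)^c,K(a)^c)$. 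The cited theorems supply the $m$-connectedness of the latter pair, and this transfers to $(X,A)$. Proposition~\ref{top-prop} then yields $\operatorname{H}_k(X,B)=0$ for all $k>n-m$, and excising $\interior(K(b))$ from both terms of $(K(a),K(b))$ identifies $\operatorname{H}_k(K(a),K(b))\cong \operatorname{H}_k(X,B)$, proving~(1).

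For~(2) I feed~(1) into the long exact sequence
\[
   \cdots\to \operatorname{H}_{k+1}(K(a),K(b))\to \operatorname{H}_k(K(b))\xrightarrow{\iota_\#}\operatorname{H}_k(K(a))\to \operatorname{H}_k(K(a),K(b))\to\cdots
\]
of the pair $(K(a),K(b))$: for $k>n-m$ both flanking relative groups vanish, so $\iota_\#$ is an isomorphism, and for $k=n-m$ only $\operatorname{H}_{k+1}(K(a),K(b))$ vanishes, so $\iota_\#$ is at least injective. The main obstacle is handling the manifold-with-boundary case of Theorems~\ref{main-abstract-boundary} and~\ref{main-boundary}: one must take $A=M(a)$ and $B=M(b)\cup\overline{V(a)\setminus V(b)}$, meeting along the corner $\partial V(a)$, and then Proposition~\ref{top-prop} only directly yields $\operatorname{H}_k(X,B)\cong \operatorname{H}_k(K(a),K(b)\cup \overline{V(a)\setminus V(b)})=0$ for $k>n-m$. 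To descend to $\operatorname{H}_k(K(a),K(b))$ I invoke the long exact sequence of the triple $(K(a),\,K(b)\cup \overline{V(a)\setminus V(b)},\,K(b))$, whose residual term excises to $\operatorname{H}_k(\overline{V(a)\setminus V(b)},\partial V(b))$; this vanishes because the smoothness of the family $V(t)$ gives a product-collar structure $\overline{V(a)\setminus V(b)}\cong \partial V(b)\times[a,b]$, and the argument then concludes exactly as in the boundaryless case.
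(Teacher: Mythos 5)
Your proof is correct and follows the paper's argument essentially verbatim: apply Proposition~\ref{top-prop} to the shell $X=K(a)\setminus\interior(K(b))$, pass by excision to $(K(a),K(b))$, and read off~(2) from the long exact sequence of the pair. One small terminological caveat: the passage from $m$-connectedness of $(K(b)^c,K(a)^c)$ to that of $(X,A)$ is a homotopy-theoretic statement, not a homology one, so invoking ``good-pair excision'' isn't quite the right justification; the paper also treats this transfer as routine (via collars of the smooth boundaries $\partial K(a)$ and $\partial K(b)$), so this is a shared gloss rather than a gap unique to your write-up. In the moving-boundary case you descend from $B=M(b)\cup\overline{V(a)\setminus V(b)}$ to $M(b)$ via the long exact sequence of a triple plus excision, whereas the paper instead replaces $(X,B)$ by the homotopy-equivalent pair $(X,\partial K(b))$ before excising; these are the same collar argument, merely ordered differently.
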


\begin{proof}
Consider first the case of Theorems~\ref{main} and~\ref{main-abstract}.
Let $A=\partial K(a)$, $B=\partial K(b)$, and $X= K(a)\setminus \interior(K(b))$.
By those theorems, $(K(b)^c, K(a)^c)$ is $m$-connected.
Since $\partial K(a)$ and $\partial K(b)$ are smooth, $m$-connectivity of $(X,A)$ follows easily.  
Thus by Proposition~\ref{top-prop},
\begin{equation}\label{homology-zero}
    \operatorname{H}_k(X, B) = 0
\end{equation}
for $k>n-m$.
But by excision,  $\operatorname{H}_k(X,B)= \operatorname{H}_k(K(a),K(b))$.  This proves assertion~\eqref{relative-homology}.
Assertion~\eqref{homology-inclusion} follows immediately by the long exact sequence for $\operatorname{H}_*(K(a), K(b))$.

Now consider the case of Theorems~\ref{main-abstract-boundary} and~\ref{main-abstract}.
For simplicity, let us suppose that the boundary of the moving surface is fixed:
\[
     \partial M(t) \equiv \Gamma \qquad (t\in [a,b]).
\]
Then one applies Proposition~\ref{top-prop} exactly as in the previous paragraph.  In that paragraph, 
the $A$ and $B$ were disjoint, whereas now $A\cap B = \Gamma$.  (Of course $X$
has corners, so the boundary is not smooth, but $X$ is a topological manifold with boundary,
so Proposition~\ref{top-prop} applies.)

If the boundary of the surface $M(t)$ is not fixed, one can still apply Proposition~\ref{top-prop}:
one lets $A=\partial K(a)$ and
\[
   B = (\partial K(b)) \cup (\cup_{t\in [a,b]} \Gamma(t))
\]
and argues as before to get~\eqref{homology-zero}.
It follows that
\[
    \operatorname{H}_k(X, \partial K(b))=0
\]
since  $(X,B)$ is homotopy equivalent to $(X, \partial K(b))$.
Finally, one uses excision and the long exact sequence for $\operatorname{H}_*(K(a), K(b))$
exactly as before to get~\eqref{relative-homology} and~\eqref{homology-inclusion}.
\end{proof}

\begin{theorem}\label{homology-theorem}
Let $t\mapsto K(t)$ be as in Theorems~\ref{main} or~\ref{main-boundary}, 
but without the assumption about Gaussian density of singularities.

If there is an integral $p$-cycle in $K(a)^c$ that bounds a $(p+1)$-chain in
in $K(b)^c$ but not in $K(a)^c$, then there is a singularity (in the time interval $a< t < b$)
whose Gaussian density is $\ge d_p$.

If $\partial K(a)$ and $\partial K(b)$ are smooth and if there is an integral $q$-cycle
in $K(b)$ that bounds in $K(a)$ but not in $K(b)$, then there is a singularity in the time
interval $a<t<b$ with Gaussian density $\ge d_{n-q-1}$.
\end{theorem}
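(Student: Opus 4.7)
The plan is to derive both statements by contrapositive: assuming the conclusion fails, invoke Theorem~\ref{main} (or Theorem~\ref{main-boundary}) to obtain an $m$-connectedness statement for the complementary pair, and then convert that into an injectivity statement on homology via the long exact sequence. In both cases the key preliminary observation is that singularities of convex type carry Gaussian densities taking only the discrete values $d_1>d_2>\dots$ listed in Proposition~\ref{Stone}, so a strict inequality ``density $<d_p$'' automatically upgrades to ``density $\le d_{p+1}$''.

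For the first assertion, I would assume that every singularity in the time interval $(a,b)$ has Gaussian density $<d_p$. The convex-type hypothesis built into Theorems~\ref{main} and~\ref{main-boundary} combined with Proposition~\ref{Stone} forces every such density to be at most $d_{p+1}$, and Theorem~\ref{main} (or Theorem~\ref{main-boundary}) then gives that the pair $(K(b)^c,K(a)^c)$ is $(p+1)$-connected. Since both $K(a)^c$ and $K(b)^c$ are open subsets of a smooth manifold and so have the homotopy type of CW complexes, the relative Hurewicz theorem yields
\[
 \operatorname{H}_k(K(b)^c,K(a)^c)=0 \qquad \text{for every } k\le p+1.
\]
Plugging this into the long exact sequence of the pair makes the map $\operatorname{H}_p(K(a)^c)\to\operatorname{H}_p(K(b)^c)$ injective, contradicting the existence of a $p$-cycle in $K(a)^c$ that bounds in $K(b)^c$ but not in $K(a)^c$.

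For the second assertion the strategy is parallel, with Theorem~\ref{add-conclusion} playing the role of Hurewicz. Under the assumption that every singularity has density $<d_{n-q-1}$, the same convex-type/monotonicity argument forces every density to be $\le d_{n-q}$, so Theorem~\ref{main} (or Theorem~\ref{main-boundary}) gives that $(K(b)^c,K(a)^c)$ is $(n-q)$-connected. Applying Theorem~\ref{add-conclusion}\eqref{relative-homology} with $m=n-q$ then produces $\operatorname{H}_k(K(a),K(b))=0$ for all $k>q$; in particular $\operatorname{H}_{q+1}(K(a),K(b))=0$, and the long exact sequence of the pair $(K(a),K(b))$ makes $\operatorname{H}_q(K(b))\to\operatorname{H}_q(K(a))$ injective, contradicting the hypothesized $q$-cycle. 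The only genuinely non-formal step in either argument is the passage from $m$-connectedness of a pair to vanishing of relative integral homology in degrees $\le m$: for the second assertion this is already encapsulated (via Poincar\'e--Lefschetz duality) in Theorem~\ref{add-conclusion}, while for the first one appeals directly to the relative Hurewicz theorem, which applies because the relevant spaces have CW-homotopy type. Everything else is routine bookkeeping built on results proved earlier in the paper.
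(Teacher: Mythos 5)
Your proposal is correct and follows essentially the same route as the paper's proof, just phrased in the contrapositive: the paper argues that the bounding chain $S$ gives a nonzero class in the relative homology, forcing the pair to fail to be $(p+1)$-connected (resp.\ $(n-q)$-connected), and then reads off a density lower bound from Theorem~\ref{main} (or Theorem~\ref{add-conclusion}) together with the fact that the $d_k$ form a strictly decreasing sequence of allowed values. Your use of the Hurewicz-type implication (``$m$-connected pair has trivial relative homology through degree $m$'') and the long exact sequence is exactly the step the paper invokes when it passes from the nonzero relative class to non-$(p+1)$-connectedness, so the two arguments match in substance.
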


To illustrate Theorem~\ref{homology-theorem}, 
suppose $K(a)$ is connected but that $K(b)$ is not connected.  Let $x$ and $y$ be points in $K(b)$
that lie in different connected components of $K(b)$.  Then the $0$-cycle $[x]-[y]$ (i.e, the
cycle consisting of the point $x$ with multiplicity $1$ and the point $y$ with multiplicity $-1$)
bounds a $1$-chain in $K(a)$ but not in $K(b)$.  Thus according to Theorem~\ref{homology-theorem}
 there must be a singularity with
Gaussian density $\ge d_{n-1}$.

\begin{proof}
If $R$ is an integral $p$-cycle in $K(a)^c$ that bounds a $(p+1)$-chain $S$ in $K(b)^c$ but does not
bound any chain in $K(a)^c$, then $S$ represents a nonzero element of $\operatorname{H}_{p+1}(K(b)^c, K(a)^c)$,
so $(K(b)^c, K(a)^c)$ is not $(p+1)$-connected, so (by Theorem~\ref{main} or~\ref{main-boundary}),
there must be a singularity  (in the time interval $a<t<b$) whose Gaussian density is $> d_{p+1}$
and therefore $\ge d_p$.

Similarly, if $R$ is an integral $q$-cycle in $K(b)$ that bounds a $(q+1)$-chain $S$ in $K(a)$ but
not in $K(b)$, then $S$ represents an nonzero element of $\operatorname{H}_{q+1}(K(a), K(b))$, so 
by Theorem~\ref{add-conclusion}, there is a singularity with Gaussian density $\ge d_{n-q-1}$.
\end{proof}

\section{The Persistence of Neck-Pinches}\label{persist-section}

\begin{theorem}\label{persist-theorem}
Suppose $1\le m \le n$.  
Let $\FF$ be the family of all compact, mean convex regions in $\RR^{n+1}$
with smooth boundary.  
Let $\FF_m$ be the set of $K\in \FF$ such that 
 the mean curvature flow with initial surface $\partial K$ has
 a shrinking $\Ss^m\times \RR^{n-m}$ singularity.
Then $\FF_m$ has nonempty interior.
\end{theorem}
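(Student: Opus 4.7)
The plan is to exhibit a specific $K_0$ together with a $C^2$-neighborhood of it contained in $\FF_m$. The model is the one used for sharpness in the introduction: take a round $(n-m)$-sphere $S\subset\RR^{n+1}$ and let $K_\eps$ be the closed $\eps$-tubular neighborhood of $S$. For small $\eps>0$, $K_\eps$ is compact, mean convex, smoothly bounded, and a meridional $m$-sphere linking $S$ is noncontractible in $K_\eps^c$. Corollary~\ref{corollary} then yields a singularity whose tangent flow is a shrinking $\Ss^k\times\RR^{n-k}$ with $1\le k\le m$. To force $k=m$, I will combine this topological upper bound on $k$ with an entropy-based lower bound.

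The quantitative heart of the argument is that $\operatorname{Entropy}(\partial K_\eps)\to d_m$ as $\eps\to 0$: parabolically rescaling by $1/\eps$ about a point of $S$ gives smooth convergence on compact sets to the cylinder $\Ss^m\times\RR^{n-m}$ (entropy $d_m$), while Huisken's Gaussian integrals on $\partial K_\eps$ with basepoint far from $\partial K_\eps$ or with scale much larger than $\diam(S)$ contribute strictly less than $d_m$ in the limit. Since $d_m<d_{m-1}$ by~\eqref{DensitiesDecrease}, I may fix $\eps$ so small that $\operatorname{Entropy}(\partial K_\eps)<d_{m-1}$ (the case $m=1$ needs no entropy estimate, since Proposition~\ref{Stone} forces $k\ge 1$ unconditionally). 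Huisken's monotonicity then gives that the Gaussian density $d_k$ at the tangent flow is at most $\operatorname{Entropy}(\partial K_\eps)<d_{m-1}$, so $k\ge m$ by~\eqref{DensitiesDecrease}, and hence $k=m$. Thus $K_\eps\in\FF_m$.

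To upgrade to an open family, take a $C^2$-small neighborhood $\mathcal{U}$ of $K_\eps$ inside $\FF$. Three conditions must survive the perturbation: mean convexity (which is a $C^2$-open condition on $\partial K$); noncontractibility of the linking $m$-sphere in $K^c$ (the sphere still lies in $K^c$ for $C^2$-close perturbations, and $K^c$ and $K_\eps^c$ have the same homotopy type on any fixed compact set containing the linking sphere); and the strict entropy bound $\operatorname{Entropy}(\partial K)<d_{m-1}$. The last is $C^2$-continuous because the integrand of Huisken's Gaussian functional depends continuously on $(x,r)$ and on the surface, and for a $C^2$-bounded family with uniform diameter bound the sup defining entropy can be restricted to a compact parameter set (the integrand decays uniformly as $r\to\infty$ and as $(x,r)$ moves away from the surface). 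Applying the previous paragraph's singularity analysis to each $K\in\mathcal{U}$ in place of $K_\eps$ shows $\mathcal{U}\subset\FF_m$.

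The main obstacle is the analytic entropy control: verifying both $\operatorname{Entropy}(\partial K_\eps)\to d_m$ and the $C^2$-continuity of the entropy functional on a bounded family. Mean convexity and the homotopical persistence of the linking sphere are routine, and Corollary~\ref{corollary} supplies the substantive geometric input; what remains is careful estimation of Huisken's Gaussian integral on a thin tube and its small perturbations, splitting the supremum into a small-scale regime (where the local cylindrical model yields $d_m$) and a complementary regime (where uniform decay takes over).
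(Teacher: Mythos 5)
Your proof is essentially correct but uses a genuinely different openness mechanism from the paper. The paper defines $\Cc$ to be the set of $K\in\FF$ with $\pi_m(K^c)\ne 0$ and all singular Gaussian densities $<d_{m-1}$, invokes upper semicontinuity of Gaussian density \emph{as a function of the flow} to get that $\Cc$ is open in the $C^1$ topology, shows $\Cc\subset\FF_m$ exactly as you do (Corollary~\ref{corollary} gives $k\le m$, the density bound gives $k\ge m$), and shows $\Cc\ne\emptyset$ using the same thin tube $K_\eps$ around a round $(n-m)$-sphere. You replace the flow-level semicontinuity input with an entropy bound at time $0$: $\operatorname{Entropy}(\partial K_\eps)\to d_m$ so is eventually $<d_{m-1}$, and Huisken monotonicity transports that bound to every singularity. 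This makes the argument more self-contained (no need for compactness and convergence of the flows), but it costs you a derivative: entropy control via restriction of the $(x,r)$-supremum to a compact set requires uniform second-fundamental-form bounds, so your neighborhood $\mathcal U$ is open only in $C^2$, whereas the paper states the theorem with the (strictly finer) $C^1$ topology on $\FF$. If you want $C^1$, one fix is to run the entropy estimate at a small positive time $\delta>0$ rather than at $t=0$: entropy is nonincreasing along the flow, the flow has no singularities for $0<t\le\delta$ if $\delta$ is small, and by parabolic smoothing the surface at time $\delta$ depends continuously in $C^\infty$ on $C^1$ initial data — which recovers the paper's $C^1$ statement. Also note that the unverified analytic claim you flag (that the tube's entropy tends to $d_m$, with the intermediate-scale Gaussian integrals of order $(\eps/r)^m$ being negligible) is not needed in the paper's version, since there one only needs that all singularities of the flow from $K_\eps$ have density exactly $d_m$, which follows directly from the rotational symmetry.
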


Of course the meaning of ``interior" depends on the choice of topology on $\FF$.
Here we use the topology in which in $K_i$ converges to $K$ if and only
if $\partial K_i$ converges to $\partial K$ in $C^1$.  (We could use $C^k$
for any $k$ with $1\le k\le \infty$, but $C^1$ gives the strongest result.)

We remark that the degenerate neckpinches mentioned in the introduction
show that $\FF_m$ is not, in general, an open subset of $\FF$.

\begin{proof}
Let $\Cc$ be the set of $K\in \FF$ such that $K^c$ has nontrivial $m^{\rm th}$ homotopy
and such that the mean curvature flow with initial surface $\partial K$ has no singularities
of Gaussian density $\ge d_{m-1}$ (or, equivalently, such that each singularity of the flow
has Gaussian density $<d_{m-1}$).   Since the Gaussian density at a spacetime point $(x,t)$
is upper semicontinuous as a function of the spacetime point and of the flow, the set $\Cc$
is an open subset of $\FF$.
By Corollary~\ref{corollary}, the resulting mean curvature flow
has an $\Ss^k\times \RR^{n-k}$ singularity for some $k\le m$.
But by definition of $\Cc$, we have $d_k<d_{m-1}$, which implies (see~\eqref{DensitiesDecrease})
that $k>m-1$ and thus
that $k=m$.  Hence $\Cc$ is a subset of $\FF_m$.

We have shown that $\Cc$ is open and that $\Cc$ is contained in $\FF_m$.  It remains
only to show that $\Cc$ is nonempty.   Let $S$ be a round $(n-m)$-sphere in $\RR^{n+1}$.
For $\eps>0$, let $K(\eps)$ be the set of points at distance $\le \eps$ from $S$.
If $\eps>0$ is sufficiently small, $K(\eps)$ will be mean convex. Fix such an $\eps$.
By symmetry, $K(\eps)$ collapses under mean curvature flow to a round $(n-m)$-sphere, from
which it easily follows that the singularities are all $\Ss^m\times\RR^{n-m}$ singularities.
Also, $K(\eps)^c$ has nontrivial $k^{\rm th}$ homotopy, so $K(\eps)\in \Cc$ and therefore
$\Cc$ is nonempty.
\end{proof}

\section{Appendix}

Here we give a proof of the general position principle used in the proof of Theorem~\ref{main-abstract}.

\begin{proposition}
Let $N$ be a smooth $d$-dimensional manifold without boundary
and let $S$ be a subset of $N$ with Hausdorff $(d-k)$-dimensional measure $0$.
Then the collection $\mathcal{C}$ of smooth maps $F:\BB^k\to N$ such that $F(\BB^k)\cap S=\emptyset$
is dense in the set of all smooth maps from $\BB^k$ to $N$.
\end{proposition}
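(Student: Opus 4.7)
The plan is to construct a smooth finite-dimensional family of perturbations of $F_0$ and show that the set of parameters for which the perturbation hits $S$ has Lebesgue measure zero, from which density of $\mathcal{C}$ follows immediately. I would work via a Whitney embedding $N\hookrightarrow\RR^M$ together with the nearest-point projection $\Pi\colon U\to N$ from a tubular neighborhood of $N$ in $\RR^M$. For a smooth $F_0\colon\BB^k\to N$ and $v$ in a small open ball $B\subset\RR^M$ about the origin, set
\[
  F_v(x) := \Pi\bigl(F_0(x)+v\bigr).
\]
This family depends smoothly on $v$, satisfies $F_v\to F_0$ in every $C^r$ norm as $v\to 0$, and at $v=0$ the derivative $\partial_v F_v(x)$ equals the orthogonal projection $\RR^M\to T_{F_0(x)}N$, which is surjective. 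Consequently the evaluation map $\Phi\colon\BB^k\times B\to N$, $\Phi(x,v)=F_v(x)$, is a submersion on a neighborhood of $\BB^k\times\{0\}$; after shrinking $B$, I may assume $\Phi$ is a submersion on all of $\BB^k\times B$.

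The next step is to show that the bad parameter set $V:=\pi_B(\Phi^{-1}(S))\subset B$, where $\pi_B$ denotes projection onto the second factor, is Lebesgue-null. Because $\Phi$ is a submersion of rank $d$, the implicit function theorem lets me cover $\BB^k\times B$ by countably many open sets on each of which $\Phi$ becomes, in suitable smooth coordinates, a linear projection. On each such patch, $\Phi^{-1}(S)$ is carried by a smooth diffeomorphism onto a product set of the form $(S\cap W_1)\times W_2$ with $W_2$ an open subset of $\RR^{k+M-d}$. Applying the Hausdorff product inequality $\HH^{a+b}(A\times A')\le c\,\HH^a(A)\,\HH^b(A')$ (valid when one factor has finite measure) with $a=d-k$ and $b=k+M-d$, and using the hypothesis $\HH^{d-k}(S)=0$, yields $\HH^M(\Phi^{-1}(S))=0$. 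Since $\pi_B$ is Lipschitz, $V$ itself has $\HH^M$-measure zero and hence Lebesgue measure zero in $B$.

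Therefore there exist $v\in B$ arbitrarily close to the origin with $F_v(\BB^k)\cap S=\emptyset$; such $F_v$ lie in $\mathcal{C}$ and converge to $F_0$ in $C^\infty$ as $v\to 0$, which gives density. The main technical hurdle is the Hausdorff measure estimate: one must combine the submersion normal form with the product inequality for Hausdorff measures and the fact that Lipschitz maps do not increase Hausdorff measure of a given dimension. The remainder is a routine parametric transversality argument packaged through the Whitney embedding and tubular neighborhood, and in the special case where $S$ is a smooth submanifold of codimension exceeding $k$ this reduces to the classical Thom transversality statement.
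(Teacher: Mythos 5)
Your argument is correct, and its skeleton matches the paper's: fix a Whitney embedding $N\hookrightarrow\RR^M$ with tubular-neighborhood retraction $\Pi$, form the family $v\mapsto\Pi(F_0(\cdot)+v)$, and show that the set of bad parameters $v$ is Lebesgue-null. The difference is in how the measure-zero conclusion is reached. The paper splits the work into two steps: it first proves the Euclidean case $N=\RR^d$ using the explicit shear $\phi(x,y)=(x,\,y-F(x))$, which carries $\BB^k\times S$ (a set of $\HH^d$-measure zero) diffeomorphically to a set whose projection to $\RR^d$ is exactly the bad parameter set; then in the general case it pulls $S$ back via $\pi$ to a subset $\pi^{-1}(S)\subset\RR^{d+j}$ of $\HH^{(d+j)-k}$-measure zero and quotes the Euclidean case verbatim. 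You instead verify directly that the evaluation map $\Phi(x,v)=\Pi(F_0(x)+v)$ is a submersion near $\BB^k\times\{0\}$ (by computing $\partial_v\Phi|_{v=0}$, which is the orthogonal projection onto $T_{F_0(x)}N$), invoke the submersion normal form to put $\Phi^{-1}(S)$ locally into the shape $(\text{diffeomorphic copy of }S)\times(\text{fiber})$, and then apply the Hausdorff product estimate. Both proofs reduce to the same elementary fact — that the product of a set of $\HH^{d-k}$-measure zero with a bounded Euclidean set of dimension $b$ has $\HH^{(d-k)+b}$-measure zero, and Lipschitz projection cannot increase this — but the paper keeps the change of variables completely explicit while you route it through the normal form for submersions. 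Your version is a bit more structural and would generalize more readily; the paper's is a bit more self-contained. One small caveat: the inequality $\HH^{a+b}(A\times A')\le c\,\HH^a(A)\,\HH^b(A')$ is \emph{not} valid in the generality you state (finite measure of one factor does not suffice); what you actually need, and what is true by a direct covering argument, is that $\HH^a(A)=0$ together with $A'$ bounded in $\RR^b$ forces $\HH^{a+b}(A\times A')=0$. That is exactly what the application requires, so the argument stands once the citation is tightened.
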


\begin{proof}
First consider the case $N=\RR^d$.  
Let $F:\BB^k\to N$ be a smooth map.  We will prove the proposition in this case by showing
\begin{equation}\label{almost-every-v}
\text{If $F\in C^\infty(\BB^k,\RR^d)$, then $F(\cdot)+v\in \mathcal{C}$ for almost every $v\in \RR^d$.}
\end{equation}
The set $\Pi^{-1}(S)=\BB^k\times S$ has $k+(d-k)$-dimensional (i.e., $d$-dimensional) measure~$0$.
(Here $\Pi: \BB^k\times \RR^d\to \RR^d$ is the projection map.)
Therefore its diffeomorphic image $\phi(\Pi^{-1}(S))$ under the diffeomorphism
\[
   \phi: (x,y)\in \BB^k\times \RR^n \mapsto (x,y-F(x))
\]
has $d$-dimensional measure $0$.  Hence the projected image $\Pi(\phi(\Pi^{-1}(S)))$ of $\phi(\Pi^{-1}(S))$ 
in $\RR^d$
has Lebesgue measure $0$:
\begin{equation}\label{LebesgueNullset}
       \LL^d(\Pi(\phi(\Pi^{-1}(S))))=0.
\end{equation}
Now
\begin{equation}\label{first-iff}
v\in \Pi(\phi(\Pi^{-1}(S)))
\iff
\text{$(x,v)\in \phi(\Pi^{-1}(S))$ for some $x\in \BB^k$},
\end{equation}
and
\begin{equation}\label{second-iff}
\begin{aligned}
(x,v)\in \phi(\Pi^{-1}(S))
&\iff
\phi^{-1}(x,v)\in \Pi^{-1}(S) 
\\
&\iff
(x,F(x)+v)\in \Pi^{-1}S
\\
&\iff
F(x)+v \in S
\\
\end{aligned}
\end{equation}
The desired conclusion~\eqref{almost-every-v} follows immediately
from~\eqref{LebesgueNullset}, \eqref{first-iff}, and ~\eqref{second-iff}.
This completes the proof in the case $N=\RR^d$.

For a general manifold $N$, we may assume that $N$ is a smooth
submanifold of some Euclidean space $\RR^{d+j}$.
Let $U$ be an open subset of $\RR^{d+j}$ that contains $N$ and for which
the nearest point retraction $\pi:U\to N$ exists and is smooth.

Let $F: \BB^k\to N$ be a smooth map, and let $\delta=d(F(\BB^k), U^c)$.
Now the set $\pi^{-1}(S)$ has $(d-k)+j$ dimensional measure $0$,
or, equivalently $(d+j)-k$-dimensional measure $0$.  Thus by~\eqref{almost-every-v},
the map $F(\cdot)+v$ has image disjoint from $\pi^{-1}(S)$ for almost every $v\in \RR^{d+j}$ with $|v|<\delta$.
Therefore the map $\pi(F(\cdot)+v)$ has image disjoint from $S$ for almost every $v\in \RR^{d+j}$ with
$|v|<\delta$.
\end{proof}

\begin{bibdiv}

\begin{biblist}

\bib{AAG}{article}{
   author={Altschuler, Steven},
   author={Angenent, Sigurd B.},
   author={Giga, Yoshikazu},
   title={Mean curvature flow through singularities for surfaces of
   rotation},
   journal={J. Geom. Anal.},
   volume={5},
   date={1995},
   number={3},
   pages={293--358},
   issn={1050-6926},
   review={\MR{1360824 (97j:58029)}},
}

\bib{CM-Generic-I}{article}{
 author={Colding, Tobias},
 author={Minicozzi, William},
 title={Generic mean curvature flow I: generic singularities},
 note={arXiv:0908.3788},
 date={2009},
}

\bib{ES}{article}{
   author={Evans, L. C.},
   author={Spruck, J.},
   title={Motion of level sets by mean curvature. I},
   journal={J. Differential Geom.},
   volume={33},
   date={1991},
   number={3},
   pages={635--681},
   issn={0022-040X},
   review={\MR{1100206 (92h:35097)}},
}

\bib{Hatcher}{book}{
   author={Hatcher, Allen},
   title={Algebraic topology},
   publisher={Cambridge University Press},
   place={Cambridge},
   date={2002},
   pages={xii+544},
   isbn={0-521-79160-X},
   isbn={0-521-79540-0},
   note={Also available free online at \url{http://www.math.cornell.edu/~hatcher/}},
   review={\MR{1867354 (2002k:55001)}},
}

\bib{Huisken1}{article}{
   author={Huisken, Gerhard},
   title={Flow by mean curvature of convex surfaces into spheres},
   journal={J. Differential Geom.},
   volume={20},
   date={1984},
   number={1},
   pages={237--266},
   issn={0022-040X},
   review={\MR{772132 (86j:53097)}},
}

\bib{Ilmanen-levelsetmanifold}{article}{
   author={Ilmanen, Tom},
   title={The level-set flow on a manifold},
   conference={
      title={Differential geometry: partial differential equations on
      manifolds (Los Angeles, CA, 1990)},
   },
   book={
      series={Proc. Sympos. Pure Math.},
      volume={54},
      publisher={Amer. Math. Soc.},
      place={Providence, RI},
   },
   date={1993},
   pages={193--204},
   review={\MR{1216585 (94d:58040)}},
}
		
\bib{Ilmanen-levelsetindiana}{article}{
   author={Ilmanen, Tom},
   title={Generalized flow of sets by mean curvature on a manifold},
   journal={Indiana Univ. Math. J.},
   volume={41},
   date={1992},
   number={3},
   pages={671--705},
   issn={0022-2518},
   review={\MR{1189906 (93k:58057)}},
   doi={10.1512/iumj.1992.41.41036},
}

\bib{Ilmanen-White-MinimizingCone}{article}{
 author={Ilmanen, Tom},
 author={White, Brian},
 title={Sharp Lower Bounds on Density of Area-Minimizing Cones},
 note={arXiv:1010.5068},
 date={2011},
}

\bib{Ilmanen-White-MinimalCone}{article}{
 author={Ilmanen, Tom},
 author={White, Brian},
 title={Lower Bounds on Density of Minimal Cones and the Best Constant in
    Allard's Regularity Theorem},
 note={In preparation},
 date={2011},
}

\bib{Ilmanen-White-GaussDensity}{article}{
 author={Ilmanen, Tom},
 author={White, Brian},
 title={Gaussian density bounds for mean curvature flows},
 note={In preparation},
 date={2011},
}

\bib{Stone-Density}{article}{
   author={Stone, Andrew},
   title={A density function and the structure of singularities of the mean
   curvature flow},
   journal={Calc. Var. Partial Differential Equations},
   volume={2},
   date={1994},
   number={4},
   pages={443--480},
   issn={0944-2669},
   review={\MR{1383918 (97c:58030)}},
   doi={10.1007/BF01192093},
}

\bib{White-Topology}{article}{
   author={White, Brian},
   title={The topology of hypersurfaces moving by mean curvature},
   journal={Comm. Anal. Geom.},
   volume={3},
   date={1995},
   number={1-2},
   pages={317--333},
   issn={1019-8385},
   review={\MR{1362655 (96k:58051)}},
}

\bib{White-Stratification}{article}{
   author={White, Brian},
   title={Stratification of minimal surfaces, mean curvature flows, and
   harmonic maps},
   journal={J. Reine Angew. Math.},
   volume={488},
   date={1997},
   pages={1--35},
   issn={0075-4102},
   review={\MR{1465365 (99b:49038)}},
   doi={10.1515/crll.1997.488.1},
}

\bib{White-Size}{article}{
   author={White, Brian},
   title={The size of the singular set in mean curvature flow of mean-convex
   sets},
   journal={J. Amer. Math. Soc.},
   volume={13},
   date={2000},
   number={3},
   pages={665--695 (electronic)},
   issn={0894-0347},
   review={\MR{1758759 (2001j:53098)}},
   doi={10.1090/S0894-0347-00-00338-6},
}

\bib{White-Nature}{article}{
   author={White, Brian},
   title={The nature of singularities in mean curvature flow of mean-convex
   sets},
   journal={J. Amer. Math. Soc.},
   volume={16},
   date={2003},
   number={1},
   pages={123--138 (electronic)},
   issn={0894-0347},
   review={\MR{1937202 (2003g:53121)}},
   doi={10.1090/S0894-0347-02-00406-X},
}

\bib{White-Subsequent}{article}{
 author={White, Brian},
 title={Subsequent singularities in mean-convex mean curvature flow},
 note={arXiv:1103.1469},
 date={2011},
}

\end{biblist}

\end{bibdiv}	

\end{document}